\newtheorem{theorem}{Theorem}[section]
\newtheorem{proposition}[theorem]{Proposition}
\newtheorem{lemma}[theorem]{Lemma}
\theoremstyle{remark}
\newtheorem{remark}{Remark}
\begin{document}

\title[Carleman polynomials]{Asymptotics of Carleman polynomials for level curves of the inverse of a shifted Zhukovsky transformation}

\author[P. Dragnev]{Peter Dragnev}
\email{dragnevp@ipfw.edu}
\address{Indiana-Purdue University Fort Wayne, Department of Mathematical Sciences,
2101 E. Coliseum Boulevard, Fort Wayne, IN 46805-1499, USA}

\author[E. Mi\~{n}a-D\'{\i}az]{Erwin Mi\~{n}a-D\'{\i}az}
\email{minadiaz@olemiss.edu}
\address{University of Mississippi, Department of Mathematics, Hume Hall 305, P. O. Box 1848,  University, MS  38677-1848, USA}

\author[M. Northington V]{Michael Northington V}
\email{michael.c.northington.v@vanderbilt.edu}
\address{Vanderbilt University, Department of Mathematics,
1326 Stevenson Center, Nashville, TN 37240, USA}
\keywords{Orthogonal polynomials, Carleman polynomials, Bergman polynomials, asymptotic behavior}
\subjclass{42C05, 30E10, 30E15}
\thanks{The third author conducted his research while at the University of Mississippi as a GAANN fellow.}
\begin{abstract}
This paper complements the recent investigation of \cite{DM} on the asymptotic behavior of polynomials orthogonal over the interior of an analytic Jordan curve $L$. We study the specific case of $L=\{z= w-1 +(w-1)^{-1},\ |w|=R\}$, for some $R>2$, providing an example that exhibits the new features discovered in \cite{DM}, and for which the asymptotic behavior of the orthogonal polynomials is established over the entire domain of orthogonality. Surprisingly, this  variation of the classical example of the ellipse turns out to be quite sophisticated. After properly normalizing the corresponding orthonormal polynomials $p_n$, $n=0,1,\ldots$, and on certain critical subregion of the orthogonality domain, a subsequence $\{p_{n_k}\}$ converges if and only if $\log_{\mu^4}(n_k)$ converges modulo 1 ($\mu$ being an important quantity associated to $L$). As a consequence, the limiting points of the sequence $\{p_n\}$ form a one parameter family of functions, the parameter's range being the interval $[0,1)$. The 
polynomials $p_n$ are much influenced by a certain integrand function, the explained behavior being the result of this integrand having a nonisolated singularity that is a cluster point of poles.  The nature of this singularity sparks purely from geometric considerations,
 as opposed to the more common situation where the critical singularities come from the orthogonality weight.
\end{abstract}
\maketitle
\section{Introduction and new results}
The study of orthogonal polynomials over planar regions seems to have originated in the work of T. Carleman \cite{Carleman}, followed up by contributions from several authors, but more prominently by P. K. Suetin (see his monograph \cite{Suetin} and the many references therein).

Recently, the subject has experienced a new surge, with many new interesting results in a variety of topics such as the asymptotic behavior and zero distribution of the orthogonal polynomials \cite{DM, Gust, Levin, Maymeskul, Mina4, Mina3, Mina2, SY, Simanek2}, universality and Christoffel functions \cite{Lubinsky,SY,Totik}, and the existence of recurrence relations \cite{BSS, Sty1, Sty2}. In particular, \cite{Gust} and \cite{Totik} consider orthogonality over several domains, while \cite{SY} considers orthogonality with respect to certain potential theoretic varying weights. The papers \cite{Saff} and \cite{Simanek1}, though more general in scope, also discuss important implications for planar orthogonality.

Let $G_1$ be a bounded simply-connected domain of $\mathbb{C}$, whose boundary $L_1$ is an analytic Jordan curve, and let $\{p_n(z)\}_{n=0}^{\infty}$ be the unique sequence of polynomials satisfying that $p_n$ is a polynomial of degree $n$ with positive leading coefficient, and that
\begin{equation*}
\frac{1}{\pi}\int_{G_1}p_n(z)\overline{p_m(z)}dxdy=\begin{cases}
0, & n\not=m,\\
1, & n=m.
\end{cases}
\end{equation*}

These are the polynomials originally investigated by Carleman in \cite{Carleman}. Among other things, Carleman derived an asymptotic formula that establishes the behavior of $p_n(z)$ as $n\to\infty$ on certain neighborhood of $\overline{\mathbb{C}}\setminus G_1$ (the set denoted by $\Omega_\rho$ below). To state this result with precision, we first need to convene on some notation.

For each $r>0$, we define
\begin{equation*}
	\mathbb{T}_r:=\{w:|w|=r\}, \quad  \Delta_r := \{w: r<|w| \leq \infty\}, \quad  \mathbb{D}_r:=\{w:|w|<r\}.
\end{equation*}

Let $\Omega_1$ be the unbounded component of $\overline{\mathbb{C}}\setminus L_1$, and let $\psi(w)$ be the unique conformal map of $\Delta_1$ onto $\Omega_1$ that satisfies $\psi(\infty)=\infty$ and $\psi'(\infty)>0$.  Because $L_1$ is analytic, there is a smallest number $0\leq \rho <1$ for which $\psi$ admits an analytic and univalent continuation to $\Delta_\rho$, and we define
\[
\phi(z): \Omega_\rho \rightarrow \Delta_\rho
\]
to be the inverse of $\psi$.

Finally, for each $\rho \leq r< \infty$, define
\begin{align*}
	\Omega_r:= \psi(\Delta_r), \quad  L_r := \partial \Omega_r, \quad  G_r:= \mathbb{C} \setminus \overline{\Omega}_r,
\end{align*}
so that for $r>\rho$, $L_r$ is an analytic Jordan curve.

Carleman proved that
\begin{equation}\label{carlemanformula}
\lim_{n\to\infty}\frac{p_n(z)}{\sqrt{n+1}[\phi(z)]^n}=  \phi'(z), \quad z\in\Omega_\rho,
\end{equation}
the convergence being uniform on compact subsets of $\Omega_\rho$  (for a more complete statement, see \cite[Satz IV]{Carleman}, \cite[Sec. 1]{Gaier}, and also \cite{series}).

This establishes the asymptotic behavior of $p_n(z)$ on the closed exterior $\overline{\Omega}_1$ of $L_1$, and on a portion of its interior $G_1$, namely, on the ``strip" $\Omega_\rho\cap G_1$. What happens at the remaining points of $G_1$ has been recently investigated in \cite{DM,Mina4}. In turns out that there is a subset $\Sigma_1\subset G_1$, which is, in general, larger than the strip $\Omega_\rho\cap G_1$, on which an asymptotic formula just like (\ref{carlemanformula}) holds true. This set $\Sigma_1$ is, however, less straightforward to define, and its construction depends on a conformal map $\varphi(z)$ of $G_1$ onto $\mathbb{D}_1$.

Such a conformal map $\varphi$ has a meromorphic and univalent continuation to $G_{1/\rho}$ (see \cite{DM} for details), so that the composition $\varphi(\psi(w))$ is a well-defined meromorphic function in the annulus $\rho<|w|<1/\rho$. We can then define the important quantity $\mu\geq 0$ to be the smallest number such that $\varphi(\psi(w))$ has a meromorphic continuation, denoted by $h_{\varphi}(w)$, to the annulus $\mu < |w| < 1/\rho$.

We let $\Sigma$ be the set of points $z \in G_1$ such that the equation
\begin{equation}\label{eq1}
h_{\varphi}(w)=\varphi(z)
\end{equation}
has at least one solution in the annulus $\mu< |w|<1$, and let $\Sigma_0:=G_1\setminus \Sigma$.

For fixed $z \in \Sigma$, of the solutions that the equation $(\ref{eq1})$ has in $\mu<|w|<1$, only finitely many (say $s$ of them)  have largest modulus, and we denote these solutions of largest modulus by $\omega_{z,1}, \ldots, \omega_{z,s}$. Letting $\alpha_{z,k}$ denote the multiplicity of  $h_{\varphi}$  at $\omega_{z,k}$, we associate to each integer $p\geq 1$ the set
\begin{equation}\label{eq115}
\Sigma_p:=\{z\in \Sigma :  \alpha_{z,1}+ \ldots +\alpha_{z,s}=p\}\,.
\end{equation}

Thus, $\Sigma_1$ consists of those points $z \in \Sigma$ such that the equation $(\ref{eq1})$ has one solution of largest modulus, and this solution is simple. Finally, we define the functions $\Phi:\Sigma_1\rightarrow \{w: \mu<|w|<1\}$ and $r:G_1\rightarrow [\mu,1)$ by
\begin{equation*}
\Phi(z):=\omega_{z,1}, \quad z \in \Sigma_1,
\end{equation*}
and
 \begin{equation*}
	r(z):=\begin{cases}
			|\omega_{z,1}|, &z \in \Sigma,\\
			\mu, &z \in \Sigma_0.		
\end{cases}
\end{equation*}

It is not difficult to see (see Lemma 11 and Corollary 12 of \cite{DM}) that neither $\mu$ nor the $\Sigma_p$ sets depend on the interior conformal map $\varphi$ chosen. Moreover, $\Sigma$ and $\Sigma_1$ are open, $\Phi(z)$ is analytic and univalent, and $r(z)$ is continuous.

Notice that for $z \in \Omega_{\rho} \cap G_1$,
\[
h_{\varphi}(\phi(z))=\varphi(\psi(\phi(z)))= \varphi(z),
 \]
so that
\begin{equation*}
\Phi(z)=\phi(z),\quad z \in \Omega_{\rho} \cap G_1,
\end{equation*}
and therefore, $\Sigma_1 \supset \Omega_{\rho}\cap G_1$. In general, $\Sigma_1$ is larger that $\Omega_{\rho}\cap G_1$.

Using this partition of $G_1$ into $\Sigma_p$ sets, it was proven in \cite{DM} that
\begin{equation}\label{eq5}
\lim_{n\to \infty}\frac{p_n(z)}{\sqrt{n+1} [\Phi(z)]^n}=\Phi'(z),\quad z\in\Sigma_1,
\end{equation}
the convergence being uniform on compact subsets of $\Sigma_1$, and that
\begin{equation}\label{eq101}
\limsup_{n\rightarrow \infty} |p_n(z)|^{1/n} = r(z), \quad z\in G_1\setminus \Sigma_1.
\end{equation}

Both (\ref{eq5}) and (\ref{eq101}) can be obtained from the following integral representation, which is fundamental as well for proving the results of this paper.
\begin{proposition} \label{cor1} Let $r$ be any fixed number satisfying that $\rho<r<1$. Then, for every integer $n$ sufficiently large,
\begin{equation}\label{eq4}
p_n(z)=  \frac{\sqrt{n+1}\varphi'(z)}{2\pi i}\oint_{\mathbb{T}_1} \frac{w^{n}(1+K_n(w))  dw}{h_\varphi(w)-\varphi(z)},\quad z \in G_1,
\end{equation}
where $K_n(w)$ is analytic in $|w|<1/r$  and $K_n(w)=O(r^{2n})$ locally uniformly as $n\to\infty$ on $|w|<1/r$.
\end{proposition}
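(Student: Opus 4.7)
My plan is to derive (\ref{eq4}) as a quantitative refinement of Cauchy's integral formula for $p_n$, transferred to $\mathbb{T}_1$ through the boundary correspondence $h_\varphi$. Since $p_n \circ \varphi^{-1}$ is analytic in $\mathbb{D}_1$ and continuous on $\overline{\mathbb{D}_1}$, Cauchy's formula gives $p_n(z) = \frac{1}{2\pi i}\oint_{\mathbb{T}_1} p_n(\varphi^{-1}(u))(u-\varphi(z))^{-1}\,du$ for $z \in G_1$. Because $h_\varphi|_{\mathbb{T}_1}=\varphi\circ\psi|_{\mathbb{T}_1}$ is an orientation-preserving homeomorphism of $\mathbb{T}_1$, the substitution $u = h_\varphi(w)$, together with $h_\varphi'(w) = \varphi'(\psi(w))\psi'(w)$, converts this into
\[
p_n(z) \;=\; \frac{1}{2\pi i}\oint_{\mathbb{T}_1}\frac{Q_n(w)\,\varphi'(\psi(w))\,dw}{h_\varphi(w)-\varphi(z)},\qquad Q_n(w):=p_n(\psi(w))\,\psi'(w).
\]
Here $Q_n$ is analytic in $\Delta_\rho$, and from $p_n(\zeta)=\lambda_n\zeta^n+\ldots$ and $\psi(w)=cw+\ldots$ at $\infty$, its Laurent expansion in $\Delta_\rho$ contains only powers $w^k$ with $k\le n$.

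Next, I would sharpen Carleman's asymptotic (\ref{carlemanformula})---which is equivalent to $Q_n(w)/(\sqrt{n+1}\,w^n) \to 1$ locally uniformly in $\Delta_\rho$---to
\[
Q_n(w) \;=\; \sqrt{n+1}\,w^n\bigl(1+\widetilde K_n(w)\bigr),\qquad \widetilde K_n(w)=O(r^{2n}) \text{ locally uniformly on } \Delta_r,\ r>\rho .
\]
The mechanism is to exploit the orthogonality relations $\int_{G_1}p_n(z)\overline{z^j}\,dA=0$ for $0\le j<n$: via Green's identity these transcribe into boundary integrals on $L_1$ which, pulled back to $\mathbb{T}_1$ through $\psi$, become linear conditions on the Laurent coefficients of $Q_n$ at powers $w^k$ with $k<n$. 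Combined with the $L^2$-control on $p_n$ furnished by the Carleman-type analysis, these conditions yield the doubled exponent: one factor $r^n$ comes from the exterior asymptotics of $p_n$ on a level curve inside $\Omega_\rho$, and the other from pairing against a monomial of degree $<n$ rewritten on that same curve.

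Finally, I would match the resulting expression to (\ref{eq4}) by writing $\varphi'(\psi(w)) = \varphi'(z) + [\varphi'(\psi(w))-\varphi'(z)]$ in the integrand. At every $w_0$ in the annulus $\rho<|w|<1/\rho$ satisfying $h_\varphi(w_0)=\varphi(z)$, univalence of the meromorphic extension of $\varphi$ on $G_{1/\rho}$ forces $\psi(w_0)=z$, and hence $\varphi'(\psi(w_0))=\varphi'(z)$. The quotient $[\varphi'(\psi(w))-\varphi'(z)]/[h_\varphi(w)-\varphi(z)]$ is therefore analytic in this annulus, so the discrepancy is an integral of an analytic function on a neighborhood of $\mathbb{T}_1$ with bound $O(r^{2n})$. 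Rewriting this contribution as a further Cauchy-type integral against the kernel $(h_\varphi(w)-\varphi(z))^{-1}$ produces an analytic correction $k_n$ on $|w|<1/r$ of size $O(r^{2n})$; setting $K_n := \widetilde K_n + k_n$ yields (\ref{eq4}) with the claimed analyticity and decay.

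The principal obstacle is the sharp $O(r^{2n})$ estimate of the second paragraph: it is what makes the Carleman main term emerge with an exponentially small remainder, and extracting it from the qualitative Carleman limit requires careful use of the orthogonality relations---this is the technical heart of the argument and the essential input from \cite{DM}.
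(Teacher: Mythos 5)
Your starting point (Cauchy's formula for $p_n\circ\varphi^{-1}$ on $\mathbb{D}_1$, transplanted to $\mathbb{T}_1$ by the substitution $u=h_\varphi(w)$) is sound and close in spirit to the change of variables $\xi=\psi(w)$ that the paper performs, but the argument breaks down at exactly the two places where the content of the proposition lives. First, the function $\widetilde K_n(w)=Q_n(w)/(\sqrt{n+1}\,w^n)-1$ you construct is analytic only where $\psi$ is, i.e.\ on $\Delta_\rho$ (in general its Laurent expansion there has infinitely many negative powers, and for the specific curve of this paper $Q_n$ has a pole at $w=1/R<1/r$), whereas the proposition demands $K_n$ analytic and $O(r^{2n})$ on the \emph{disk} $|w|<1/r$. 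This is not cosmetic: in the proofs of Theorems \ref{mainthm} and \ref{thm4} the contour is pushed from $\mathbb{T}_1$ down to circles $|\zeta|=\mu^{4N-2}$ with $N\to\infty$, and the bound $K_n(\lambda(t))=O(r^{2n})$ is used uniformly on $|t|\le 1$; your $K_n$ is not even defined inside $\mathbb{T}_\rho$. In the paper the disk-analyticity comes for free from the construction in \cite{series}: $F_n=\sum_k f_{n,2k+2}^*$ is a sum of Cauchy integrals over $\mathbb{T}_{1/r}$, hence automatically analytic off $\mathbb{T}_{1/r}$, and Theorem 1.1 of \cite{series} supplies the $O(r^{2n})$ bound --- your ``doubled exponent'' paragraph is only a heuristic for that theorem, which is the imported technical heart and cannot be replaced by the sketch given.

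Second, the final matching step is wrong as stated. The discrepancy $\frac{1}{2\pi i}\oint_{\mathbb{T}_1}Q_n(w)\,\frac{\varphi'(\psi(w))-\varphi'(z)}{h_\varphi(w)-\varphi(z)}\,dw$ is not $O(r^{2n})$: the integrand has modulus of order $\sqrt{n+1}$ on $\mathbb{T}_1$, and the best one gets by deforming into the annulus of analyticity is an additive bound $O(\sqrt{n+1}\,r^n)$ with $r>\rho$. Since $\mu=2/(R+\sqrt{R^2-4})<2/R=\rho$, such a term is exponentially \emph{larger} than $p_n(z)\asymp\mu^n/\sqrt{n}$ on $\Sigma_0$, so it can neither be discarded nor ``absorbed''; and no mechanism is offered for rewriting this $z$-dependent quantity exactly as $\frac{\sqrt{n+1}\,\varphi'(z)}{2\pi i}\oint_{\mathbb{T}_1} w^{n}k_n(w)\,[h_\varphi(w)-\varphi(z)]^{-1}dw$ with a $z$-independent $k_n$ analytic in $|w|<1/r$ --- that is an integral equation, not bookkeeping. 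The whole point of (\ref{eq4}) is that the error sits \emph{inside} the integral multiplied by $w^n$, so that after contour deformation it contributes a relative error $O(r^{2n})$ even where $p_n$ is exponentially small; an additive error measured on $\mathbb{T}_1$ is useless here. The paper's proof avoids the issue entirely: the representation (\ref{eq1120}) from \cite{series} already carries $\varphi'(z)$ outside the integral and the kernel $[\varphi(\xi)-\varphi(z)]^{-2}$ inside, and a single integration by parts then yields (\ref{eq4}) exactly, with $\varphi'(\psi(w))$ never appearing in the integrand.
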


A first version of  Proposition \ref{cor1} was proven in \cite{Mina4}. The version stated above is simpler to use and we shall briefly indicate at the end of Section \ref{Proofs} below how to derive it from the recent results of \cite{series}.

Roughly speaking, (\ref{eq4}) is telling us that for each fixed $z\in G_1$, $p_n(z)$ behaves as $n\to\infty$ like the $-(n+1)$th coefficient of the Laurent expansion that the function $w\mapsto [h_\varphi(w)-\varphi(z)]^{-1}$ has in the annulus $\rho<|w|<1$. If $z\in \Sigma_1$, the Laurent expansion encounters on its inner circle of convergence just one singularity, which happens to be a simple pole, thereby  implying (\ref{eq5})\footnote{If $z\in \Sigma_p$, $p\geq 2$,  the first singularities are also finitely many poles, but they have a total multiplicity larger than $1$, and although this certainly leads to a better estimate than  (\ref{eq101}), that estimate is essentially pointwise, unless more is known about the particularities of the orthogonality domain.}.

If $z\in \Sigma_0$,  however, the inner circle of convergence of the Laurent expansion is $\mathbb{T}_\mu$, where the function $h_\varphi(w)$ encounters its first nonpolar singularity. Given that the behavior of $h_\varphi(w)$ on $\mathbb{T}_\mu$ can be wildly erratic, (\ref{eq101}) is, in general, the best we can say for points $z\in \Sigma_0$.

Nonetheless, for more specific orthogonality domains one should be able to say more than just (\ref{eq101}), and it is the purpose of this paper to provide a ``full featured" orthogonality domain for which the corresponding set $\Sigma_1$ is \emph{larger} than $\Omega_\rho\cap G_1$,  \emph{the interior of $\Sigma_0$ is nonempty}, and we can establish the strong asymptotic behavior of $p_n(z)$ for every point $z\in \Sigma_0$.

It turns out, however, that providing such an example is much trickier than it might seem at first sight. The difficulty lies in that, when trying to guarantee that $\Sigma_1$ be larger than $\Omega_\rho\cap G_1$, we lose control of the nature of the first \emph{nonpolar} singularities that the function $h_\varphi(w)$ encounters.


We take for orthogonality domain $G_1$ the interior of the image by the Zhukovsky transformation of a circle $C_R$ centered at $-1$ of radius $R>2$ (see Figure \ref{figura1}). In other words, the boundary
\begin{equation}\label{eq118}
L_1:=\left\{w-1+(w-1)^{-1}:|w|=R\right\}
\end{equation}
of $G_1$ is a level curve of the inverse of the shifted Zhukovsky transformation $w\mapsto w-1+(w-1)^{-1}$.
\begin{figure}
\centering
\includegraphics[scale=.7]{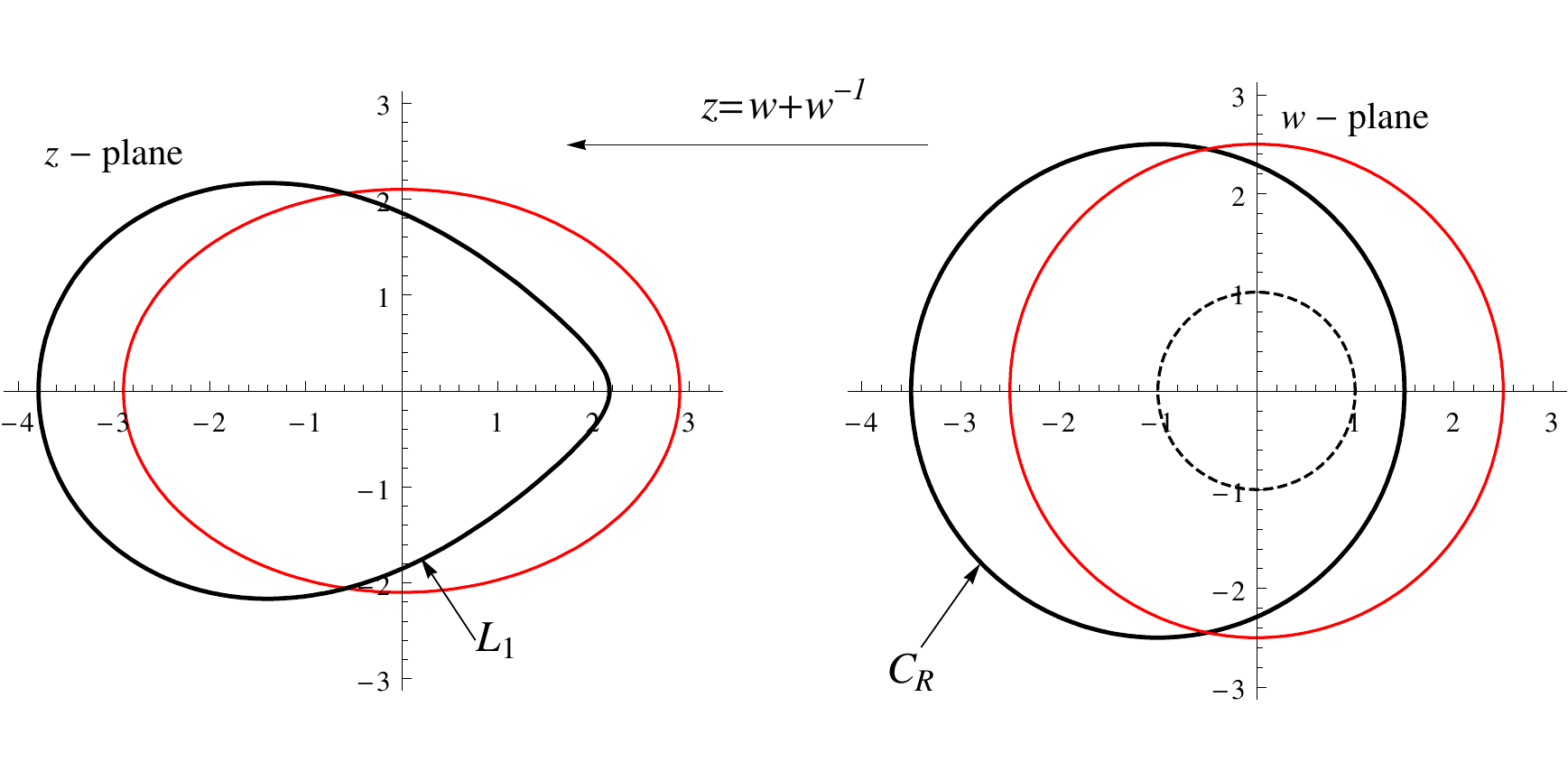}
\caption{$L_1$ is the image by the Zhukovsky transformation of the circle $C_R=\{w:|w+1|=2.5\}$.}
\label{figura1}
\end{figure}
\begin{figure}
\centering
\includegraphics[scale=.7]{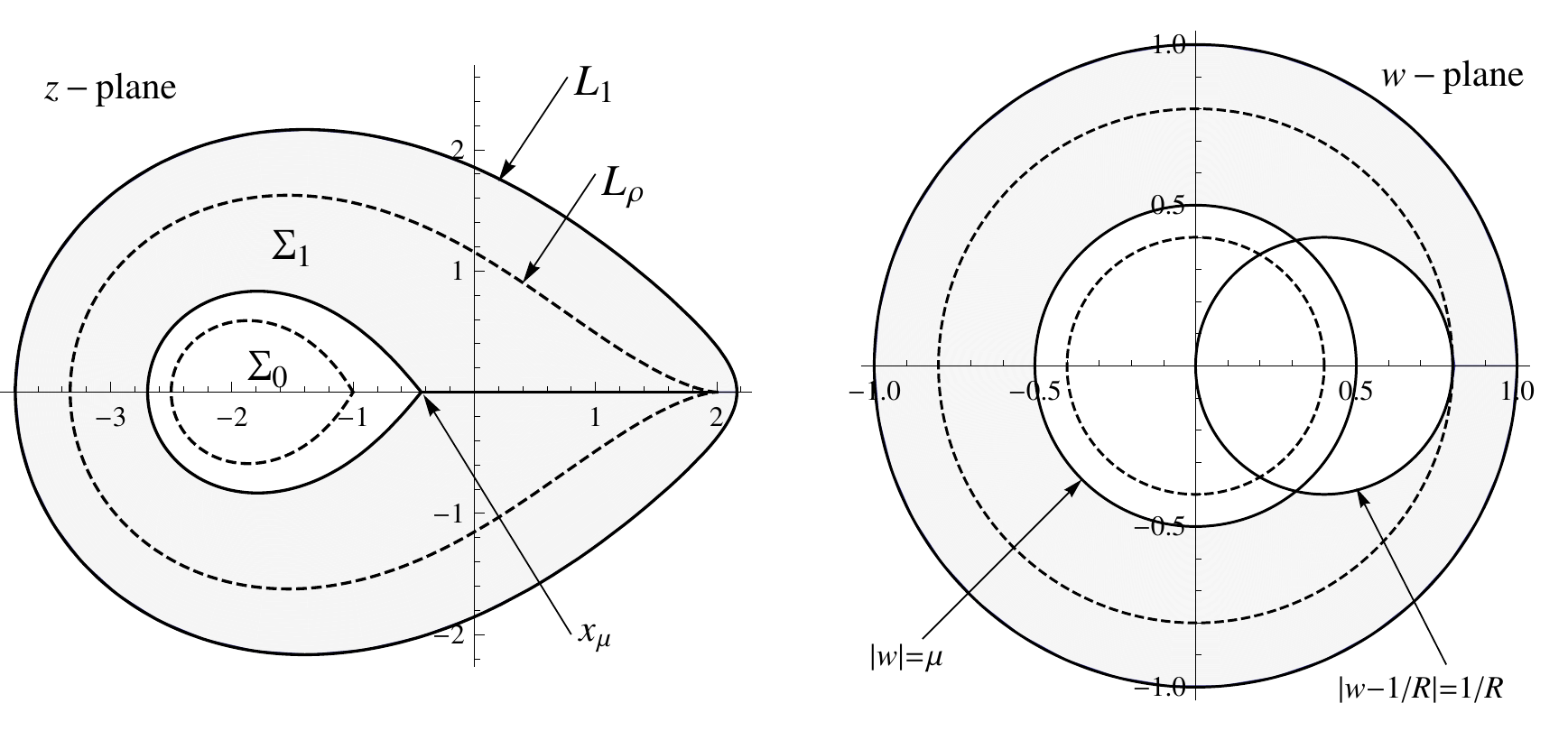}
\caption{Sets $\Sigma_1$, $\Sigma_2$ and $\Sigma_0$ for the curve $L_1$ defined in (\ref{eq118}) for $R=2.5$. $\Sigma_1$ is the greyish region, $\Sigma_2$ is the segment $(x_\mu,2]$, and $\Sigma_0$ is the white region together with its boundary.}\label{figura3}
\end{figure}

To avoid unnecessary complications, we shall  refer to Figure \ref{figura3} and content ourselves with a visual understanding of the geometric aspects of the curve $L_1$ defined by (\ref{eq118}), making it all precise in Proposition \ref{thm1} of the next  section.

For this curve we have that $G_1=\Sigma_0\cup\Sigma_1\cup\Sigma_2$,  where $\Sigma_1$ is the full greyish region, $\Sigma_2$ is the half-open segment $(x_\mu,2]$, and $\Sigma_0$ is the white region together with its boundary. The set $\Omega_\rho\cap G_1$ is the strip between $L_1$ and the dotted line $L_\rho$. Also, for this curve we have
\[
 \mu=\frac{R-\sqrt{R^2-4}}{2}.
\]

\begin{figure}
\centering
\includegraphics[scale=.7]{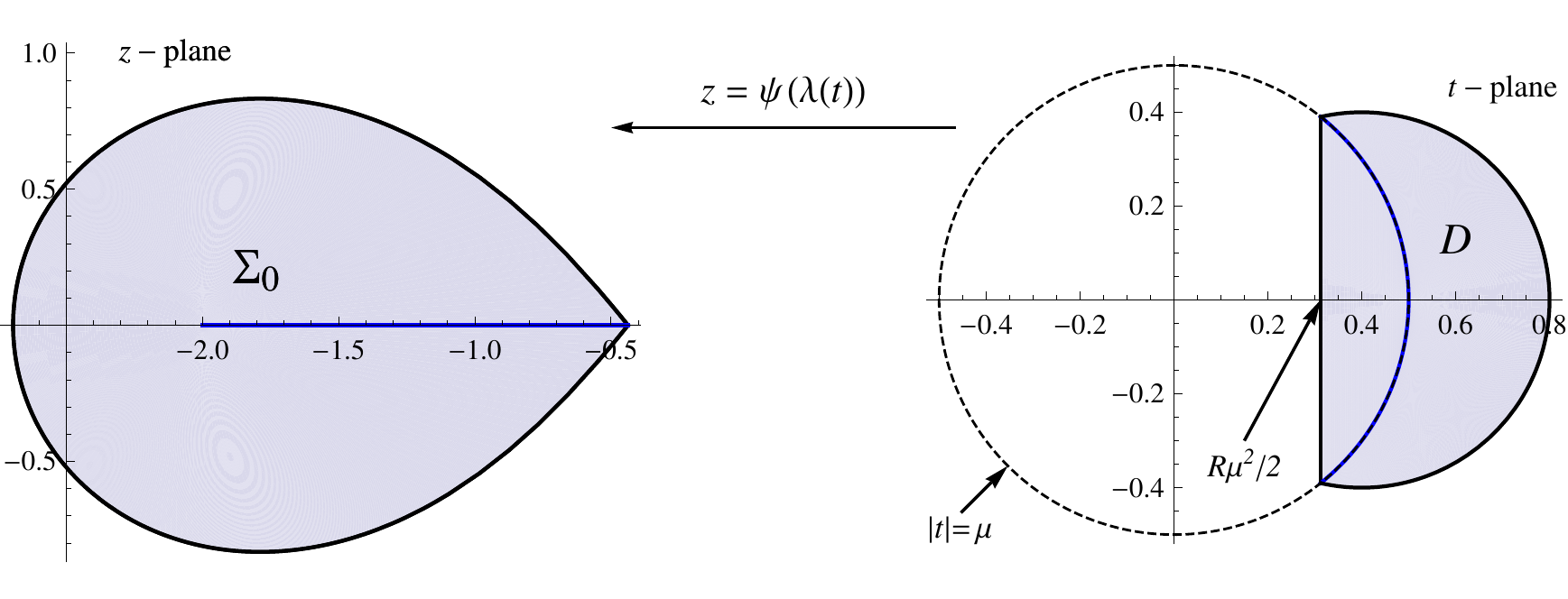}
\caption{The region $D$ is symmetric about the circle $|t|=\mu$, and is mapped by $\psi(\lambda(t))$ onto the interior of $\Sigma_0$ in a two-to-one fashion.}\label{RegionD}
\end{figure}

What makes this example intriguing is that the corresponding function  $h_\varphi(w)$ encounters on the circle of radius $\mathbb{T}_\mu$ a non-isolated singularity of ``essential type", in the sense that in every punctured neighborhood of it, $h_\varphi(w)$ attains every value of the extended complex plane. This singularity  imposes on the $p_n$'s a very interesting behavior, described by Theorems \ref{thm8} and \ref{mainthm} below. We emphasize that the nature of this singularity sparks purely from geometric considerations.

Two essential components to describe the behavior of $p_n$ on the interior of $\Sigma_0$ are the M\"{o}bius transformation $\lambda(z)$ and the doubly infinite series $\chi(t)$ defined by
\begin{equation}\label{fractional}
\lambda(z):=\frac{z-\mu}{\mu z-1}, \qquad \chi(t):=t\sum_{k=-\infty}^{\infty}\mu^{4k}e^{(\mu-\mu^{-1})\mu^{4k}t}, \quad  \Re(t)>0.
\end{equation}

For the curve $L_1$ given by (\ref{eq118}), the exterior conformal map $\psi$ of $\Delta_1$ onto $\Omega_1$ is given by 
\begin{equation}\label{mapping}
\psi(w)=Rw-1+\frac{1}{Rw-1}, \quad w\in\overline{\mathbb{C}},
\end{equation}
and the composition $\psi(\lambda(t))$ maps the region 
\begin{equation}\label{domainD}
D:=\left\{t:\left|t-1/ R\right|<1/ R,\ \Re(t)>R\mu^2/2\right\}
\end{equation}
in a two-to-one fashion onto the interior of $\Sigma_0$ (see Figure \ref{RegionD}). The asymptotics of $p_n(z)$ for $z$ in the interior of $\Sigma_0$ take a simpler and more elegant form if they are stated instead on the region $D$ by means of  the functions 
\[
\mathcal{P}_n(t):=p_n(\psi(\lambda(t))),\quad n\geq 0.
\]

Hereafter the  fractional part of a number $x$ will be denoted by $\langle x\rangle$.

\begin{theorem}\label{thm8} Let $\{n_k\}_{k=1}^\infty$ be a subsequence of the natural numbers. The sequence  $\{\sqrt{n_k}\mu^{-n_k}\mathcal{P}_{n_k}\}_{k=1}^\infty$ converges normally on $D$ if and only if
\begin{equation}\label{limite}
\lim_{k\to\infty}e^{2\pi i(\log_{\mu^4}(n_k)-q)}=1
\end{equation}
for some $q\in[0,1)$, in which case
\begin{equation}\label{ecuacion1}
\lim_{k\to\infty}\frac{\sqrt{n_k}\,\mathcal{P}_{n_k}(t)}{\mu^{n_k}}=f_q(t):=\frac{(1-\mu t)^2(1-\mu^3/t)^2}{(1-\mu^4)R} \cdot\frac{\chi(\mu^{4q}t)-\chi(\mu^{4q+2}/t)}{t-\mu^2/t}.
\end{equation}

Moreover,  $f_q\not=f_p$ for $0\leq q<p<1$, and since the sequence $\{\langle\log_{\mu^4}n\rangle\}_{n\in\mathbb{N}}$ is dense in $[0,1)$, it follows that the family $\{f_q(t):0\leq q<1\}$ comprises all the limit points that the sequence $\{\sqrt{n}\mu^{-n}\mathcal{P}_n\}_{n=0}^\infty$ has on $D$.
\end{theorem}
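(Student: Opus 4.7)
My plan is to apply Proposition \ref{cor1} and extract the asymptotics via a Mittag-Leffler-type expansion around the non-isolated essential singularity on $\mathbb{T}_\mu$. For $z=\psi(\lambda(t))$ with $t\in D$ (so $z\in\mathrm{int}(\Sigma_0)$), the equation $h_\varphi(w)=\varphi(z)$ has no solutions in $\mu<|w|<1$, so the contour in \eqref{eq4} may be collapsed to $\mathbb{T}_r$ for any $\mu<r<1$. Choosing $r\in(\mu,\mu^{1/3})$ kills the $K_n$-contribution at order $r^{3n}=o(\mu^n)$, reducing the problem to the asymptotics of $\oint_{\mathbb{T}_r} w^n[h_\varphi(w)-\varphi(z)]^{-1}\,dw$.

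The crux is to understand $h_\varphi$ near $\mathbb{T}_\mu$ using the Zhukovsky symmetry $\psi(w)=\psi(w/(Rw-1))$ together with $\mu+\mu^{-1}=R$. Iterating the involution against the dilation $w\mapsto\mu^4 w$ should produce two geometric sequences $\{w_k^\pm(t)\}_{k\in\mathbb{Z}}\subset\overline{\mathbb{D}_\mu}$ of preimages of $\varphi(z)$ under $h_\varphi$, clustering on $\mathbb{T}_\mu$ at rate $w_k^+/\mu-1\asymp\mu^{4k}$ and paired by $w_k^-=\mu^2/w_k^+$ (reflecting the $D$-symmetry $t\leftrightarrow\mu^2/t$ that makes $\psi\circ\lambda$ two-to-one). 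A Mittag-Leffler-type expansion
\[
\frac{1}{h_\varphi(w)-\varphi(z)} = \sum_{k\in\mathbb{Z}}\left[\frac{A_k^+(t)}{w-w_k^+(t)} + \frac{A_k^-(t)}{w-w_k^-(t)}\right] + \text{(entire remainder)},
\]
with residues of size $A_k^\pm\asymp\mu^{4k}$, then allows term-by-term integration (all $w_k^\pm$ being strictly inside $\mathbb{T}_r$) to yield the residue sum $\sum_{k\in\mathbb{Z}}[A_k^+(w_k^+)^n+A_k^-(w_k^-)^n]$.

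To take the limit, I expand $(w_k^\pm(t))^n=\mu^n\exp(\gamma^\pm(t)\mu^{4k}n)(1+o(1))$, with $\gamma^+(t)\propto(\mu-\mu^{-1})t$ and $\gamma^-(t)$ its counterpart under $t\mapsto\mu^2/t$. The hypothesis \eqref{limite} is equivalent to the existence of integers $m_k\to\infty$ with $n_k\mu^{4m_k}\to\mu^{4q}$; the index shift $k\mapsto k-m_k$ turns $\mu^{4k}n_k$ into $\mu^{4(k-q)}(1+o(1))$, and the extra $\mu^{4m_k}$ extracted from the residues combines with $\sqrt{n_k}$ and $\mu^{-n_k}$ to leave a finite prefactor. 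The resulting sums converge to $\chi(\mu^{4q}t)$ and (via the symmetric branch) to $\chi(\mu^{4q+2}/t)$, and an explicit computation of $\varphi'(\psi(\lambda(t)))$ produces the rational prefactor in \eqref{ecuacion1}. Conversely, if \eqref{limite} fails for every $q\in[0,1)$, the phases $\mu^{4k}n_k$ do not stabilize and no limit exists. Distinctness $f_q\neq f_p$ follows from $\chi(\mu^4 t)=\chi(t)$ (via the reindexing $k\mapsto k-1$) combined with the nontrivial $q$-dependence of the difference $\chi(\mu^{4q}t)-\chi(\mu^{4q+2}/t)$; density of $\{\langle\log_{\mu^4}n\rangle\}_{n\in\mathbb{N}}$ in $[0,1)$ is immediate from the vanishing increments $\log_{\mu^4}(n+1)-\log_{\mu^4}(n)\to 0$.

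The main obstacle will be the rigorous construction of the Mittag-Leffler expansion around the non-isolated essential singularity, the verification of the precise geometric scalings $w_k^\pm-\mu\asymp\mu^{4k}$ and $A_k^\pm\asymp\mu^{4k}$, and uniform tail bounds on the doubly infinite sum over compact subsets of $D$, needed to interchange the limit $n_k\to\infty$ with the summation. Pinning down the constants that combine into the exact prefactor $(1-\mu t)^2(1-\mu^3/t)^2/[(1-\mu^4)R(t-\mu^2/t)]$ in $f_q$ is where the interplay between the Zhukovsky geometry of $L_1$ and the specific value $\mu=(R-\sqrt{R^2-4})/2$ becomes essential.
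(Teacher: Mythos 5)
Your route to the asymptotic formula \eqref{ecuacion1} is essentially the paper's: after discarding the $K_n$-term, the paper also expands $[h_\varphi(w)-\varphi(z)]^{-1}$ over the two geometric sequences of poles clustering at the essential singularity (it does this by the substitution $w=\lambda(\zeta)$, which sends the cluster point $\mu$ to the origin and turns the preimages into $\{\mu^{4k}t\}_{k\geq 0}$ and $\{\mu^{4k+2}/t\}_{k\geq 0}$, and then applies the residue theorem on shrinking circles $|\zeta|=\mu^{4N-2}$ --- the rigorous stand-in for your Mittag-Leffler expansion), computes the residues from \eqref{derivatives}, and approximates $\mu^{-n}\lambda^n(\mu^{4k}t)$ by $e^{(\mu-\mu^{-1})\mu^{4k}nt}$ with the uniform tail control of Lemmas \ref{lem7}--\ref{lem2}. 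Your reindexing $k\mapsto k-m_k$ with $n_k\mu^{4m_k}\to\mu^{4q}$ is equivalent to the paper's use of the integral representation \eqref{fractional1} together with $\chi(\mu^4t)=\chi(t)$, so the ``if'' direction is fine. Two small corrections to your picture: the preimages cluster at the single point $w=\mu$, not along $\mathbb{T}_\mu$, and only the indices $k\geq 0$ contribute poles inside the unit disk.

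The genuine gap is the ``only if'' direction, which you reduce to the assertion that $f_q\neq f_p$ for $q\neq p$ and then justify by appealing to ``the nontrivial $q$-dependence of $\chi(\mu^{4q}t)-\chi(\mu^{4q+2}/t)$''. That is precisely the statement to be proved, and it is not obvious: by $\chi(\mu^4t)=\chi(t)$ the map $q\mapsto f_q(t)$ is $1$-periodic, and its Fourier coefficients in $q$ are governed by $\Gamma(1+2\pi ni/\ln\mu^4)$, which decays like $e^{-\pi^2 n/|\ln\mu^4|}$; the family $\{f_q\}$ could a priori collapse to a single function, and numerically the $f_q$ are nearly indistinguishable. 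The paper spends the bulk of its proof of Theorem \ref{thm8} ruling this out: it substitutes $t=\mu^{4x+1}$, applies the Poisson summation formula to the odd $1$-periodic function $g_{\mathfrak{q}}(x)=\chi(\varrho^{\mathfrak{q}+x})-\chi(\varrho^{\mathfrak{q}-x})$, evaluates the coefficients $c_n$ in closed form via the Gamma function, and uses Stirling's formula together with the equidistribution of $\{\ln n/\ln\varrho\}$ modulo $1$ to show that not all $\Re(c_n)$ vanish. Without an argument of this kind your proof establishes only the ``if'' half of the equivalence and does not justify that the limit points form a genuine one-parameter family; you would need to supply this Fourier-analytic (or some other) injectivity argument to complete the proof.
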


Thus, $p_n(z)$ decreases like $\mu^n/\sqrt{n}$ on the interior of $\Sigma_0$, but $\lim_{n\to\infty }\sqrt{n}\mu^{-n}p_n(z)$ only exists through those subsequences $\{n_k\}$ for which $\log_{\mu^4}(n_k)$ converges modulo 1. 

Theorem \ref{thm8} follows by combining the following asymptotic formula with the integral representation for $\chi(t)$ given in (\ref{fractional1}).
\begin{theorem}\label{mainthm}
 The asymptotic representation
\begin{align}\label{ecuacion2}
\begin{split}
\mathcal{P}_{n}(t)={}&\frac{\mu^{n}}{\sqrt{n}}\cdot\frac{(1-\mu t)^2(1-\mu^3/t)^2}{(1-\mu^4)R}\left(\frac{\chi(nt)-\chi(n\mu^2/t)}{t-\mu^2/t}+O(1/n)\right)
\end{split}
\end{align}
holds true locally uniformly  on $D$ as $n\to\infty$.
\end{theorem}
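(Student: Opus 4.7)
The natural starting point is Proposition \ref{cor1}, which for $z=\psi(\lambda(t))$ with $t\in D$ gives
\[
\mathcal{P}_{n}(t)=\frac{\sqrt{n+1}\,\varphi'(\psi(\lambda(t)))}{2\pi i}\oint_{\mathbb{T}_{1}}\frac{w^{n}(1+K_{n}(w))\,dw}{h_{\varphi}(w)-\varphi(\psi(\lambda(t)))},
\]
with $K_n(w)=O(r^{2n})$ on $|w|<1/r$ for some $r\in(\rho,1)$. The plan is to make the right-hand side explicit by computing $\varphi$ and $h_\varphi$ in closed form, then deforming the contour down toward $\mathbb{T}_\mu$ and carrying out an asymptotic analysis dictated by the geometric accumulation of poles of $h_\varphi$ on the circle $|w|=\mu$.

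First, I would exploit the Joukowski structure $\psi(w)=v+1/v$ with $v=Rw-1$. The curve $L_1$ corresponds in the $v$-plane to $|v+1|=R$, and the pair \{$|v|=1$, $|v+1|=R$\} has Apollonius points that one checks satisfy $x^{2}-(R^{2}-2)x+1=0$, i.e.\ they are precisely $\mu^{2}$ and $1/\mu^{2}$. Consequently the M\"obius $T(v)=(v-\mu^{2})/(v-1/\mu^{2})$ sends these two circles to the concentric circles $|\tau|=\mu^{2}$ and $|\tau|=\mu$, and interchanges the Joukowski sheets via $T(1/v)=\mu^{4}/T(v)$. From this one reads off an explicit expression for $\varphi$ as a single-valued function on $G_1$ (invariant under $\tau\mapsto\mu^{4}/\tau$), a closed form for $h_{\varphi}(w)$, and the crucial fact that the poles of $h_{\varphi}$ in the annulus $\mu<|w|<1$ form a geometric sequence clustering on $\mathbb{T}_\mu$ with ratio $\mu^{4}$ (the factor $\mu^{4}$ coming directly from the involution). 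The fact that the equation $h_{\varphi}(w)=\varphi(\psi(\lambda(t)))$ has no solution in $\mu<|w|<1$ for $t\in D$ is exactly the statement that $\psi(\lambda(t))\in\mathrm{int}(\Sigma_{0})$.

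Second, since the integrand has no poles between $\mathbb{T}_1$ and $\mathbb{T}_\mu$, I would deform the contour to $\mathbb{T}_{\mu(1+c/n)}$ for a suitable constant $c$, chosen so that the new contour passes through the ``middle'' of one of the geometric clusters of poles of $h_\varphi$. On this contour $|w^{n}|\sim\mu^{n}$, so after the scaling $w=\mu e^{s/n}$ the expression $w^{n}/(h_{\varphi}(w)-\varphi(z))$ becomes a sum of simple fractions indexed by the poles $w_{k}\sim\mu^{1+4k/n}$ (more precisely, $\mu$ times a point of the form $e^{4k\log\mu/n}$ shifted by a factor depending on $t$); the associated residues, computed from the closed form of $h_{\varphi}$ obtained in Step~1, contribute exponentials of the shape $e^{(\mu-1/\mu)\mu^{4k}(nt)}$. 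Summing these contributions over $k\in\mathbb{Z}$ produces a doubly infinite series that, after identifying the leading factor and invoking the integral representation of $\chi$ announced in (\ref{fractional1}), matches $\chi(nt)$. The $\chi(n\mu^{2}/t)$ term, with its opposite sign, comes from the second lift of $z$ under $\psi\circ\lambda$ (the involution $v\mapsto 1/v$, equivalently $t\mapsto\mu^{2}/t$ on $D$), which produces a symmetric set of poles on the other side of $\mathbb{T}_\mu$ and whose contribution must be included to keep $\mathcal{P}_{n}$ well defined on $G_{1}$.

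Finally, one checks that the explicit prefactor $\sqrt{n+1}\varphi'(\psi(\lambda(t)))$ combines with the residue coefficients and with the denominators $h_{\varphi}(w_{k})-\varphi(z)$ (expanded to first order in $1/n$) to yield exactly $(1-\mu t)^{2}(1-\mu^{3}/t)^{2}/((1-\mu^{4})R)\cdot 1/(t-\mu^{2}/t)$, while the tail of the $\chi$ series and the bound $K_{n}=O(r^{2n})$ are controlled uniformly in $t$ on compact subsets of $D$ to produce the $O(1/n)$ error. The main obstacle is the middle step: one has to choose the scaling of the deformed contour so delicately that the geometric spacing of the poles, their residues, and the exponential $w^{n}=\mu^{n}e^{s}$ align to reconstruct the theta-like series $\chi$ rather than a less structured Laplace-type sum. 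Keeping the estimates uniform in $t$ (which moves the cluster of poles on $\mathbb{T}_\mu$) while simultaneously discarding the $K_{n}$-contribution is the place where I expect the most technical work.
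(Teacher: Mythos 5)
Your outline is essentially the paper's proof, and the key structural insights are right: start from Proposition~\ref{cor1}, exploit the $\mu^4$-periodicity coming from the Joukowski involution to locate the poles of the integrand in a geometric progression accumulating at $w=\mu$, and recognize the resulting residue sum as the theta-like series $\chi$ via~(\ref{fractional1}), with the two preimages $t$ and $\mu^2/t$ producing the two $\chi$-terms. The differences are in execution, and in each case the paper's route is the safer one. First, the paper never computes $\varphi$ or $h_\varphi$ in closed form (which would be an elliptic/theta-type expression, not something one simply ``reads off''); it only needs the functional equation~(\ref{eq3232}) and the derivative identity~(\ref{derivatives}), both consequences of the reflection structure you describe, and it is shown separately that nothing depends on the choice of $\varphi$. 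Second, no $n$-dependent contour is used: after the substitution $w=\lambda(\zeta)$ the contour stays at $\mathbb{T}_1$ and an inner circle $|\zeta|=\mu^{4N-2}$ is shrunk to the origin, so the residue theorem gives the \emph{exact} infinite sum~(\ref{1001}); your partial-fraction expansion on $\mathbb{T}_{\mu(1+c/n)}$ would, once justified, reduce to the same sum, so the scaling of the contour is not where the difficulty lies. Third --- and this is where your sketch is thinnest --- the real work is proving that the infinite residue sum equals $\chi(nt)$ up to a uniform $O(1/n)$: the paper does this by summation by parts (Lemma~\ref{lem2}), converting the sum into an integral matching~(\ref{fractional1}), and by the elementary but carefully quantified approximation $\mu^{-n}\lambda^n(ts/n)\approx e^{-(\mu^{-1}-\mu)ts}$ of Lemma~\ref{lem7}; ``aligning the poles, residues and exponentials'' is precisely this lemma, and without it the error term is not controlled uniformly on compacts of $D$. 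One small correction: both families of poles $\lambda(\mu^{4k}t_{z,\pm})$ lie \emph{inside} $|w|<\mu$ and accumulate at $w=\mu$ from the same side; the symmetry $t\mapsto\mu^2/t$ is a reflection across $|t|=\mu$ in the $t$-plane, not a splitting of poles across $\mathbb{T}_\mu$ in the $w$-plane.
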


Combining this theorem with (\ref{chi}) we obtain that  
\[
\frac{\sqrt{n+1}\,p_{n+1}(z)}{\mu^{n+1}}-\frac{\sqrt{n}p_{n}(z)}{\mu^{n}}=O(1/n)
\]
locally uniformly on the interior of $\Sigma_0$.

\begin{remark} For each $z\in\mathbb{C}$, the solutions $t_{z,\pm}$ to the equation $z=\psi(\lambda(t))$ are given by 
\[
t_{z,\pm}=\mu\cdot\frac{4-(R^2-2)z \mp R\sqrt{R^2-4}\sqrt{z^2-4}}{2(R^2-2-z)},
\]
which satisfy that $t_{z,+}=\mu^2/t_{z,-}$. Thus, replacing $t$ in (\ref{ecuacion1}) and (\ref{ecuacion2}) by anyone of these two solutions yields the corresponding asymptotic statements in the $z$-plane, more specifically, on the interior of $\Sigma_0$.  
\end{remark}

\begin{remark} Let $Z$ be the set of all $z_0\in\overline{\mathbb{C}}$ with the property that every neighborhood of $z_0$ contains zeros of infinitely many $p_n$'s. From the results of \cite{DM}, we infer that $\partial \Sigma_0\cup\Sigma_2\subset Z\subset \Sigma_0\cup\Sigma_2$. The points of $Z$ lying in the interior of $\Sigma_0$ are the images under $\psi(\lambda(t))$ of the zeros that the functions $$\frac{\chi(\mu^{4q}t)-\chi(\mu^{4q+2}/t)}{t-\mu^2/t},\quad q\in [0,1),$$ have in the region $D$. Numerical computations indicate that for at least some values of $q$ and $\mu$, these functions do have zeros in $D$.
\end{remark}

\begin{remark} The type of singularity encountered by the function $h_\varphi$ of our example seems to occur with frequency. This is a consequence of the meromorphic continuation properties that $h_\varphi$ displays in general, as explained in Propositions 4, 5 and 6 of  \cite{DM}. Such is the case, for instance, of the level curve
\[
L_1=\left\{z=w+\frac{1}{2w^2}:|w|=R\right\},\quad R>1
\]
(for $R=1$, this curve is the hypocycloid of three cusps). The example discussed in this paper, though already complex, is the simplest we have found.

\end{remark}

Our next result establishes the behavior of $p_n(z)$ at the remaining points of $G_1$.
\begin{theorem}\label{thm4} The estimate
\begin{equation}\label{parta}
\frac{(2R)^{n+1}p_n(z)}{\sqrt{n+1}(z+2+\sqrt{z^2-4})^{n}}= \frac{z+\sqrt{z^2-4}}{\sqrt{z^2-4}}+O(1/n)
\end{equation}
holds uniformly  as $n\to\infty$ on compact subsets of $\Sigma_1\cup\partial \Sigma_0\setminus\{x_\mu\}$.

For every $z=2\cos\theta$,  $0\leq \theta\leq \arccos(x_\mu/2)$, $x_\mu=(1+\mu^2)^2-2$, we have
\begin{equation}\label{partb}
p_n(z)= \sqrt{n+1}\left(2/R\right)^{n+1}\cos^n(\theta/2)\left\{\frac{\sin( (n+2)\theta/2)}{2\sin\theta}+\epsilon_n(z)\right\},
\end{equation}
where $\epsilon_n(z)$ decays geometrically fast on compact subsets of $(x_\mu,2]$, while $\epsilon_n(z)=O(1/n)$ uniformly on $[x_\mu,2]$ as $n\to\infty$.
\end{theorem}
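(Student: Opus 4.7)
The plan is to extract both asymptotic statements from the integral representation (\ref{eq4}) of Proposition~\ref{cor1} via contour deformation and residue analysis, exploiting the explicit form of $\psi$ given in (\ref{mapping}). Writing $u = Rw - 1$, the equation $\psi(w) = z$ becomes $u + u^{-1} = z$, whose two solutions $u = (z \pm \sqrt{z^2-4})/2$ yield the two preimages $w_\pm(z) = (2 + z \pm \sqrt{z^2-4})/(2R)$ of $z$ under the meromorphically extended $\psi$. On $\Omega_\rho$ the exterior map is $\phi(z) = w_+(z)$ with $\phi'(z) = (z + \sqrt{z^2-4})/(2R\sqrt{z^2-4})$; by analytic continuation, $\Phi(z) = w_+(z)$ everywhere on $\Sigma_1$, so the leading term on the right-hand side of (\ref{parta}) is exactly $\sqrt{n+1}\,\Phi'(z)\Phi(z)^n$ after clearing denominators.

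For part (a), I would deform $\mathbb{T}_1$ in (\ref{eq4}) inward, past the simple pole of $1/(h_\varphi(w)-\varphi(z))$ at $w = \Phi(z)$, to a circle $\mathbb{T}_r$ with $\mu < r < |\Phi(z)|$. Differentiating the identity $h_\varphi \circ \Phi = \varphi$ yields $\varphi'(z)/h_\varphi'(\Phi(z)) = \Phi'(z)$, so the residue at $\Phi(z)$, together with the prefactor $\sqrt{n+1}\,\varphi'(z)$ and the bound $K_n = O(r^{2n})$, delivers exactly the claimed leading term. On compact subsets of $\Sigma_1$ bounded away from $\partial\Sigma_0$, the remaining contour integral and any subdominant pole contributions are $O((r/|\Phi(z)|)^n)$ for a fixed $r < |\Phi(z)|$, so the error is in fact geometrically small. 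The uniform $O(1/n)$ bound on compact subsets meeting $\partial\Sigma_0\setminus\{x_\mu\}$, where $\Phi(z)$ approaches $\mathbb{T}_\mu$, requires a finer treatment of the cluster of poles of $h_\varphi$ on $\mathbb{T}_\mu$ via the $\chi$-function representation used to prove Theorem~\ref{mainthm}.

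For part (b), on $z = 2\cos\theta \in [x_\mu,2]$ the preimages simplify via $u_\pm = e^{\pm i\theta}$ to $w_\pm = 2\cos(\theta/2)\,e^{\pm i\theta/2}/R$, both of modulus $2\cos(\theta/2)/R$; the relation $R = \mu + \mu^{-1}$ shows this common modulus equals $\mu$ precisely when $z = x_\mu$. These are two simple poles of equal, maximal modulus in $\mu<|w|<1$, and computing $\psi'(w_\pm) = \pm 2iR\sin\theta\cdot e^{\mp i\theta}$, one sums the residues to
\[
\frac{w_+^n}{\psi'(w_+)} + \frac{w_-^n}{\psi'(w_-)} = \frac{(2\cos(\theta/2))^n}{R^{n+1}}\cdot\frac{\sin((n+2)\theta/2)}{\sin\theta},
\]
which, multiplied by $\sqrt{n+1}$, matches the explicit leading term in (\ref{partb}). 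On compact subsets of $(x_\mu,2]$ the two poles remain at a uniform positive distance from $\mathbb{T}_\mu$ and the remainder $\epsilon_n(z)$ decays geometrically; uniformly on $[x_\mu,2]$, as $\theta\to\arccos(x_\mu/2)$ the poles $w_\pm$ reach $\mathbb{T}_\mu$, and the $O(1/n)$ bound comes from the same analysis of the clustering poles of $h_\varphi$.

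The main obstacle, shared by both parts, is the uniform $O(1/n)$ estimate at boundary points where the dominant poles touch the singular circle $\mathbb{T}_\mu$. Because $h_\varphi$ has a nonisolated essential singularity on $\mathbb{T}_\mu$ with clustering poles, the contour cannot be pushed onto $\mathbb{T}_\mu$ in a naive way; instead the residues at this infinite cluster must be summed. The resolution should parallel the analysis leading to Theorem~\ref{mainthm}, using the explicit $\chi$-function machinery to aggregate these residue contributions and extract the exact $1/n$ decay rate uniformly on the required compact sets.
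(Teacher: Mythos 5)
Your proposal is correct and follows essentially the same route as the paper: both start from the integral representation of Proposition~\ref{cor1}, extract the dominant residues at $v_{z,\pm}$ (your trigonometric computation of $w_\pm^n/\psi'(w_\pm)$ is exactly the paper's leading term in (\ref{1001001})), and handle the uniform $O(1/n)$ error where the dominant poles reach $\mathbb{T}_\mu$ by the same clustering-pole/$\chi$-function analysis (Lemma~\ref{lem2}) underlying Theorem~\ref{mainthm}. The only difference is organizational: the paper first passes to the $\lambda$-coordinate and writes the complete infinite residue expansion (\ref{1001}) valid on all of $G_1$, then specializes, whereas you deform to an intermediate circle and treat the boundary cases separately.
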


The rest of the paper is organized as follows. First, in Section 2 we acquire a good understanding of the meromorphic continuation properties of the associated function $h_\varphi$. This will allows us to represent the integral in (\ref{eq4}) as an infinite, $n$-dependent sum of residues. In Section 3 we derive several lemmas needed to estimate the asymptotic behavior as $n\to\infty$ of such infinite sums, and finally in Section \ref{Proofs}, we prove Theorems \ref{thm8}, \ref{mainthm} and \ref{thm4}, briefly indicating at the end how to derive Proposition \ref{cor1} from the recent results of \cite{series}.

\section{Meromorphic continuation of $h_\varphi$}

For a fixed value of $R>2$, let $L_1$ be given by (\ref{eq118}). From very well-known properties of the Zhukovsky transformation $w \mapsto w+1/w$, it follows that $L_1$ is an analytic Jordan curve, and that the function $\psi(w)$ given by (\ref{mapping}) maps $\Delta_1$ conformally onto the exterior $\Omega_1$ of $L_1$.  

Moreover, $\psi$ maps both $\{w:|w-1/R|>1/R\}$ and $\{w:|w-1/R|<1/R\}$ conformally onto $\overline{\mathbb{C}}\setminus [-2,2]$, while mapping both the closed upper  and lower halves of the circle $|w-1/R|=1/R$ univalently onto $[-2,2]$. Hence,
\[
\rho=2/R,
\]
and $L_\rho$ is the image by $\psi$ of the circle $\mathbb{T}_{2/R}$ (see Figure \ref{figura3}).

For every $z\in \mathbb{C}$, the equation $z=\psi(w)$ has for solutions the numbers 
\begin{equation}\label{eqq33}
v_{z,+}:=\frac{z+2+\sqrt{z^2-4}}{2R}, \quad v_{z,-}:=\frac{z+2-\sqrt{z^2-4}}{2R},\quad z\in\mathbb{C},
\end{equation}
where we denote by $\sqrt{z^2-4}$ the branch of the square root of $z^2-4$ in $\mathbb{C}\setminus[-2,2]$ that is positive along $(2,\infty)$, extended to $[-2,2]$ by taking its boundary values from the upper half plane.

When $z\in \mathbb{C}\setminus[-2,2]$, $v_{z,+}$ and $v_{z,-}$ lie, respectively, outside and inside the circle $|w-1/R|=1/R$, and consequently
\[
|v_{z,+}|=\left|R^{-1}+\frac{R^{-2}}{v_{z,-}-R^{-1}}\right|=\left|\frac{R^{-1}}{v_{z,-}-R^{-1}}\right||v_{z,-}|>|v_{z,-}|,
\]
that is, $|v_{z,+}|>|v_{z,-}|$ for every  $z\in\mathbb{C}\setminus [-2,2]$. Of course, if $z\in[-2,2]$, then $v_{z,+}=\overline{v}_{z,-}$, they lie on the circle  $|w-1/R|=1/R$, and $v_{z,+}=v_{z,-}$ if and only if $z=\pm2$.

It follows that the inverse of $\psi(w)$ is the function
\[
\phi(z)=v_{z,+}, \quad z\in \Omega_\rho,
\]
which is indeed analytic and univalent all over $\mathbb{C}\setminus [-2,2]$.

As for the corresponding $\Sigma_p$ sets and number $\mu$,  the following result was already obtained in \cite[Theorem 10]{DM}. We shall, however, give a new proof of it that is based on finding all the solutions of the equation (\ref{eq1}).
\begin{proposition} \label{thm1} For the domain $G_1$ bounded by the curve $L_1$ of (\ref{eq118}),
\begin{equation}\label{mu}
\mu=\frac{R-\sqrt{R^2-4}}{2}
\end{equation}
and $\Sigma$ consists of those points $z\in G_1$ for which $|v_{z,+}|>\mu$. Furthermore, if $z\in\Sigma$ and $\omega$ is one of the solutions of largest modulus that the equation (\ref{eq1}) has in $\mu<|w|<1$, then $\omega\in\{v_{z,+},v_{z,-}\}$. As a consequence,
\[
G_1=\Sigma_1\cup \Sigma_2\cup \Sigma_0,
\]
with $\Sigma_1$ being the image by $\psi$ of those points of $\mathbb{D}_1$ that lie exterior to both the circle $|w|=\mu$ and the circle $|w-1/R|=1/R$, and $\Sigma_2=(x_\mu,2]$, with $x_\mu=(1+\mu^2)^2-2$ (see Figure \ref{figura3}).
\end{proposition}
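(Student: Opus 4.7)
I would prove the proposition by locating all solutions in $\mu<|w|<1$ of the equation $h_\varphi(w)=\varphi(z)$ via two involutive symmetries of $h_\varphi$, reading off both the value of $\mu$ and the decomposition $G_1=\Sigma_0\cup\Sigma_1\cup\Sigma_2$ from these solutions. The first symmetry is the Galois involution $T(w)=w/(Rw-1)$ of $\psi$, which satisfies $\psi\circ T=\psi$; this pairs the two $\psi$-preimages of $z$ as $v_{z,-}=T(v_{z,+})$ and forces $h_\varphi\circ T=h_\varphi$. The second comes from the symmetry of $G_1$ about the real axis: normalizing $\varphi$ to be real on $\mathbb{R}\cap G_1$, Schwarz reflection across $\mathbb{T}_1$ takes the holomorphic form $h_\varphi(w)\,h_\varphi(1/w)=1$. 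Composing yields the functional equation
\[
h_\varphi(\tau(w))=1/h_\varphi(w),\qquad \tau(w):=T(1/w)=\frac{1}{R-w},
\]
where $\tau$ is a M\"obius transformation whose fixed points are the roots of $w^2-Rw+1=0$: $\mu=(R-\sqrt{R^2-4})/2$ (attracting, with multiplier $\tau'(\mu)=\mu^2$) and $1/\mu$ (repelling). Squaring gives $h_\varphi\circ\tau^2=h_\varphi$, so solutions of $h_\varphi(w)=\varphi(z)$ are preserved by both $T$ and $\tau^2$.

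Next, on the sub-annulus $\rho<|w|<1$ where $h_\varphi$ agrees with $\varphi\circ\psi$, the univalence of $\varphi$ on $G_{1/\rho}$ reduces the equation to $\psi(w)=z$, with solutions exactly $v_{z,+}$ and $v_{z,-}$. Propagation by $\tau^2$ yields the further sequence $\tau^{2n}(v_{z,\pm})$ for $n\ge 1$, lying in $\mu<|w|\le\rho$ and converging to $\mu$; a direct calculation on the real axis (extended via the invariant-circle structure of $\tau^2$) gives $|\tau^{2n}(v_{z,\pm})|<|v_{z,\pm}|$, and a preimage count for the meromorphic map $h_\varphi$ confirms these exhaust all solutions. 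Hence any solution of largest modulus lies in $\{v_{z,+},v_{z,-}\}$. The value $\mu=(R-\sqrt{R^2-4})/2$ is optimal because iterating $\tau^{2n}$ on the pole $v_{a^*,+}$ of $h_\varphi$ (produced by the Schwarz-extension pole of $\varphi$ at the $L_1$-reflection $a^*$ of $\varphi^{-1}(0)$) generates an infinite sequence of poles of $h_\varphi$ accumulating at $\mu$, blocking any further meromorphic extension across $\mathbb{T}_\mu$.

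Finally, the decomposition of $G_1$ follows. Since $|v_{z,+}|\ge|v_{z,-}|$ with equality precisely for $z\in[-2,2]$ (where $v_{z,\pm}$ are complex conjugate), $z\in\Sigma$ iff $|v_{z,+}|>\mu$. For $z\in[-2,2]\cap G_1$, direct computation yields $|v_{z,\pm}|^2=(z+2)/R^2$, so $|v_{z,\pm}|>\mu$ is equivalent to $z>R^2\mu^2-2=(1+\mu^2)^2-2=x_\mu$ (using $R\mu=1+\mu^2$), giving $\Sigma_2=(x_\mu,2]$. The rest of $\Sigma$ forms $\Sigma_1$, and translated to the $w$-plane, $v_{z,+}$ lies in $\mathbb{D}_1$ exterior to both $\mathbb{T}_\mu$ and $|w-1/R|=1/R$; its $\psi$-image is the grey region described, with $\Sigma_0$ the complement in $G_1$. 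The main obstacle will be rigorously justifying the modulus-contraction claim $|\tau^{2n}(v_{z,\pm})|<|v_{z,\pm}|$ throughout the complex plane (and not merely on the real axis), and completing, via a mapping-degree argument for the meromorphic $h_\varphi$, the assertion that the $\tau^2$-orbits exhaust all solutions in the full annulus $\mu<|w|<1$.
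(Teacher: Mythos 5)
Your strategy is in substance the paper's own: the two symmetries you identify, the deck transformation $T$ of $\psi$ and the reflection across $\mathbb{T}_1$, are exactly what generate the paper's functional equations, and your hyperbolic map $\tau^2$ (fixed points $\mu$, $1/\mu$, multiplier $\mu^4$) is precisely the conjugate under the involution $\lambda(t)=(t-\mu)/(\mu t-1)$ of the dilation $t\mapsto \mu^4 t$ appearing in (\ref{eq1003}). The computation of $\Sigma_2$ via $|v_{z,\pm}|^2=(z+2)/R^2$ and the identification of the optimal $\mu$ through the accumulation of reflected poles at the attracting fixed point are both sound. However, the two items you flag at the end are genuine gaps as the argument stands, and they are exactly the points the paper resolves by passing to the linearizing coordinate $t=\lambda(w)$ rather than working in the $w$-plane. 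First, the exhaustiveness of the solution list: in the $t$-plane the fundamental domains of $\tau^2$ become the round annuli $\mu^{4(k+1)}\leq|t|\leq\mu^{4k}$, and since $\lambda$ maps $\mu^2\leq|t|\leq 1$ conformally onto the closure of the region $\mathfrak{D}$ on which $\psi$ is an exact two-to-one cover of $G_1$, the map $h_\varphi\circ\lambda$ is exactly two-to-one from each such annulus onto $\overline{\mathbb{D}}_1$ or its complement. The "preimage count" is then a direct bookkeeping of two solutions per even annulus (namely $\mu^{4k}t_0$ and $\mu^{4k+2}/t_0$), with no mapping-degree argument for a meromorphic function with a non-isolated essential-type singularity at $\mu$ — an argument that would be delicate to set up rigorously in the $w$-plane, since $h_\varphi$ is not of finite degree near $\mathbb{T}_\mu$.

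Second, the modulus-contraction claim $|\tau^{2k}(v)|<|v|$ cannot be obtained by "a direct calculation on the real axis extended via the invariant-circle structure," because the invariant circles of $\tau^2$ are the circles through $\mu$ and $1/\mu$, and the modulus of $w$ is not monotone along the motion toward $\mu$ on an arbitrary such circle without further information. The paper's inequality (\ref{modulo}) is instead proved by the elementary geometric observation that $\lambda$ carries every line through the origin onto a circle through $\mu$ and $1/\mu$ whose points closest to and farthest from the origin lie, respectively, inside $\mathbb{T}_\mu$ and outside $\mathbb{T}_1$; applied to the ray $\{s t_0: s>0\}$, this controls $|\lambda(\mu^{4k}t_0)|$ for all $k\geq 1$ at once. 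Note also that even with (\ref{modulo}) in hand there is a case you must still address, namely when $|v_{z,-}|\leq\mu<|v_{z,+}|$, where the contraction statement does not apply to the $v_{z,-}$ orbit and one must check separately that that orbit contributes nothing to the annulus $\mu<|w|<1$; this is handled in the paper by the fact that $\lambda$ maps the disk $|t-1/R|\leq 1/R$ onto $|w|\leq\mu$. In short: keep your symmetries, but do the counting and the modulus estimates after conjugating by $\lambda$, where $\tau^2$ becomes multiplication by $\mu^4$ and both gaps close by inspection.
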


To prove Proposition \ref{thm1} we shall use the following fundamental result. Hereafter $\mu$ is given by (\ref{mu}) and  $\lambda(z)$ is defined by (\ref{fractional}). We shall implicitly use that $\lambda(z)$ is its own inverse and that
\begin{equation}\label{mu1}
\mu=(R-\mu)^{-1}\,.
\end{equation}

\begin{proposition}\label{proposition} Let $\varphi$ be a conformal map of $G_1$ onto $\mathbb{D}_1$.
\begin{enumerate}
\item[(a)] The function $\varphi(\psi(w))$, originally defined on $\rho<|w|<1/\rho$, admits a meromorphic continuation, denoted by $h_\varphi(w)$, to all of $\mathbb{C}\setminus \{\mu,1/\mu\}$. Moreover, $\mu$ and $1/\mu$ are both non-isolated singularities of $h_\varphi$ of ``essential type", in the sense that in every punctured neighborhood of either one of these two points, the function $h_\varphi$ attains every value of the extended complex plane.
\item[(b)] $h_\varphi\circ\lambda$ is meromorphic in $\mathbb{C}\setminus \{0\}$, and for all $k\in\mathbb{Z}\setminus\{0\}$,
 \begin{equation}\label{eq3232}
 (h_\varphi\circ\lambda)(t)=\left\{\begin{array}{ll}
 {\displaystyle\frac{1}{\overline{(h_\varphi\circ\lambda)(\overline{t}/\mu^{2k})}}}, & \mu^{2k+2}\leq |t|\leq \mu^{2k},\ k\ \mathrm{odd},\\ \\
(h_\varphi\circ\lambda)(t/\mu^{2k}), &  \mu^{2k+2}\leq |t|\leq \mu^{2k},\ k\ \mathrm{even}.
 \end{array}
 \right.
 \end{equation}
\item[(c)] For every $t_0$ with $\mu^2<|t_0|<1$, the solutions that the equation 
\[
(h_\varphi\circ\lambda)(t)=(h_\varphi\circ\lambda)(t_0)
\]
has in $0<|t|< 1$ are the elements of the two sequences $\{\mu^{4k}t_0\}_{k=0}^\infty$ and $\{\mu^{4k+2}/t_0\}_{k=0}^\infty$. Moreover,
 \begin{align}\label{derivatives}
 (h_\varphi\circ\lambda)'(\mu^{4k}t_0)&{}=-\frac{\varphi'(\psi(\lambda(t_0)))(1-\mu^4)^2\left(t_0-\mu^2/t_0\right)}{\mu^{4k+1} t_0(1-\mu t_0)^2(1-\mu^3/t_0)^2},\quad k\geq 0,
\end{align}
and if $\mu<|\lambda(t_0)|<1$, then 
\begin{equation}\label{modulo}
|\lambda(t_0)|>|\lambda(\mu^{4k}t_0)|,\quad k\geq 1.
\end{equation}
\end{enumerate}
\end{proposition}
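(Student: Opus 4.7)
The proof hinges on two symmetries of $h_\varphi$. First, $\psi$ is a degree-two rational map with critical points $0$ and $\rho = 2/R$, so it admits a nontrivial Möbius involution $\sigma(w) := w/(Rw-1)$ (with those same two fixed points) satisfying $\psi \circ \sigma = \psi$; hence $h_\varphi(\sigma(w)) = h_\varphi(w)$ wherever defined. Second, Schwarz reflection of $\varphi$ across the analytic curve $L_1 = \psi(\mathbb{T}_R)$ gives, upon pulling back by $\psi$, the functional equation
\begin{equation*}
h_\varphi(w)\,\overline{h_\varphi(R^2/\overline{w})} = 1
\end{equation*}
on a neighborhood of $\mathbb{T}_R$, hence wherever both sides are meromorphic. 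For part (a), I would start from the initial meromorphic domain $\{\mu < |w| < 1/\rho\}$ and extend $h_\varphi$ by iteratively alternating these two symmetries. The group generated by $\sigma$ and $\tau(w) := R^2/\overline{w}$ acts discretely on $\overline{\mathbb{C}}$ with limit set exactly $\{\mu, 1/\mu\}$: one checks directly $\sigma(\mu) = \mu/\mu^2 = 1/\mu$ (using $R\mu - 1 = \mu^2$), and the orbits of compacta avoiding $\{\mu, 1/\mu\}$ are finite. The orbit of the initial annulus thereby tiles $\mathbb{C}\setminus\{\mu, 1/\mu\}$; consistency of the two relations on overlaps (both are simply $\varphi\circ\psi$ in different coordinates) makes the extensions glue into a single meromorphic function. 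The essential-type character at $\mu$ and $1/\mu$ follows because the simple pole of $\varphi$ inside $G_1$ lifts to a pole of $h_\varphi$ in the starting annulus, and its images under the group action accumulate densely at both limit points with multiplicities tending to infinity, forcing $h_\varphi$ to attain every value in $\overline{\mathbb{C}}$ in any punctured neighborhood of $\mu$ or $1/\mu$.

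For part (b), I would pass to the new variable $t$ via the involution $\lambda$. Since $\lambda$ swaps $\{0, \infty\}$ with $\{\mu, 1/\mu\}$, the function $g := h_\varphi \circ \lambda$ is automatically meromorphic on $\mathbb{C}\setminus\{0\}$. A short calculation using $\mu + 1/\mu = R$ and $\mu^2 = R\mu - 1$ shows $\lambda \circ \sigma \circ \lambda(t) = \mu^2/t$, so the $\sigma$-symmetry becomes $g(t) = g(\mu^2/t)$. Translating the Schwarz relation through $\lambda$ and simplifying using this identity yields the base case $g(t) = 1/\overline{g(\mu^2\overline{t})}$ on $1 \leq |t| \leq 1/\mu^2$, which is the $k = -1$ case of (\ref{eq3232}). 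Applying this relation twice gives $g(t) = g(\mu^4 t)$; induction on $|k|$, alternating the two symmetries, produces (\ref{eq3232}) for all $k \in \mathbb{Z}\setminus\{0\}$, with the holomorphic form for even $k$ and the anti-holomorphic form for odd $k$.

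For part (c), fix $t_0$ with $\mu^2 < |t_0| < 1$. In the base annulus $\mu^2 < |t| < 1$ the composition $\psi\circ\lambda$ is two-to-one with sheets exchanged by $t\mapsto\mu^2/t$, while $\varphi$ is univalent on its image, so $g$ attains each value exactly twice there, the two preimages being $t_0$ and $\mu^2/t_0$. The full holomorphic symmetry group of $g$ is generated by the involution $t \mapsto \mu^2/t$ and the scaling $t \mapsto \mu^4 t$ (the latter obtained by composing the anti-holomorphic Schwarz relation with itself); this is an infinite dihedral group whose orbit of $t_0$ is exactly $\{\mu^{4k}t_0, \mu^{4k+2}/t_0 : k \in \mathbb{Z}\}$, and intersecting with $\{0 < |t| < 1\}$ selects precisely the $k \geq 0$ entries. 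Formula (\ref{derivatives}) then follows from the chain rule $g'(t) = \varphi'(\psi(\lambda(t)))\,\psi'(\lambda(t))\,\lambda'(t)$ evaluated at $t = \mu^{4k}t_0$, using $\psi(\lambda(\mu^{4k}t_0)) = \psi(\lambda(t_0))$ together with the explicit expressions $\lambda'(t) = (\mu^2 - 1)/(\mu t - 1)^2$ and $\psi'(w) = R - R/(Rw-1)^2$; the algebra collapses after applying $R = \mu + 1/\mu$. Finally, (\ref{modulo}) reduces to the pointwise inequality $|\lambda(\mu^{4k}t_0)| < |\lambda(t_0)|$ for $k \geq 1$, a direct Möbius computation using $0 < \mu < 1$ and $\mu < |\lambda(t_0)| < 1$.

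The main technical obstacle is the dynamical claim in part (a) that the group generated by $\sigma$ and $\tau$ has limit set exactly the two-point set $\{\mu, 1/\mu\}$ (not, say, a larger Cantor-like set), so that iterated application of the symmetries actually reaches every point of $\mathbb{C}\setminus\{\mu, 1/\mu\}$; verifying that the reflection formulas agree on every overlap of the tiled fundamental domains, so that the extensions genuinely patch into a single-valued meromorphic function, is where most of the routine bookkeeping lies.
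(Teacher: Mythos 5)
Your overall strategy coincides with the paper's: exploit the deck symmetry of the two‑to‑one map $\psi$ together with Schwarz reflection across the circle that $\psi$ sends onto $L_1$, transport everything to the $t$-plane by $\lambda$, where the two symmetries become $t\mapsto \mu^2/t$ and $t\mapsto 1/\overline{t}$ and generate the scaling $t\mapsto\mu^4 t$; the paper packages exactly this as repeated reflection of $h_\varphi\circ\lambda$ across the circles $|t|=\mu^{2k}$, all of which it maps onto the unit circle, which is formula (\ref{eq3232}). One normalization slip first: in the paper's setup $\psi$ maps $\mathbb{T}_1$, not $\mathbb{T}_R$, onto $L_1$, so the reflection is $w\mapsto 1/\overline{w}$ and the functional equation reads $h_\varphi(w)\,\overline{h_\varphi(1/\overline{w})}=1$; with $\tau(w)=R^2/\overline{w}$ as written, $\tau$ does not preserve $\{\mu,1/\mu\}$ and your limit‑set verification breaks down. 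With the correct $\tau$ the two‑point limit set is immediate, since in the $t$-coordinate the whole group consists of maps $t\mapsto ct^{\pm1}$ and $t\mapsto c\overline{t}^{\pm1}$ fixing $\{0,\infty\}$.

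Two steps are genuinely wrong as justified, though both are repairable with ingredients you already have. (i) The ``essential type'' claim: the poles of $h_\varphi$ accumulating at $\mu$ all have the \emph{same} multiplicity (the continuation is exactly $\mu^4$-periodic in $t$, so nothing grows), and accumulating poles by themselves only show that the value $\infty$ is attained near $\mu$, not every value of $\overline{\mathbb{C}}$. The correct argument, which falls out of (\ref{eq3232}) at once, is that $h_\varphi\circ\lambda$ maps $\mu^2\leq|t|\leq 1$ onto $\overline{\mathbb{D}}_1$ and $\mu^4\leq|t|\leq\mu^2$ onto $\overline{\mathbb{C}}\setminus\mathbb{D}_1$, hence by periodicity every annulus $\mu^{4(k+1)}\leq|t|\leq\mu^{4k}$ onto all of $\overline{\mathbb{C}}$; these annuli shrink to $0$ and $\infty$, i.e.\ to $\mu$ and $1/\mu$ in the $w$-plane. (ii) The derivation of (\ref{derivatives}): $\psi\circ\lambda$ is a degree-two rational map, so the identity $\psi(\lambda(\mu^{4k}t_0))=\psi(\lambda(t_0))$ you invoke is false for $k\geq 1$, and the chain-rule expression $(h_\varphi\circ\lambda)'=\varphi'(\psi\circ\lambda)\cdot(\psi\circ\lambda)'$ is valid only on the fundamental annulus where $h_\varphi\circ\lambda$ literally equals $\varphi\circ\psi\circ\lambda$. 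Substituting $\mu^{4k}t_0$ into the explicit formula for $(\psi\circ\lambda)'$ does not reproduce (\ref{derivatives}). One must instead differentiate the functional equation $(h_\varphi\circ\lambda)(\mu^{4k}t)=(h_\varphi\circ\lambda)(t)$ to get $(h_\varphi\circ\lambda)'(\mu^{4k}t_0)=\mu^{-4k}(h_\varphi\circ\lambda)'(t_0)$, and only then apply the chain rule at $t_0$, which lies in the fundamental annulus. The remaining items — the identification of the solution set in part (c) (after noting that no solutions can lie in the annuli where $|h_\varphi\circ\lambda|>1$) and the inequality (\ref{modulo}) — are handled essentially as in the paper.
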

\begin{proof}
Observe first that $v_{z,+}$ and $\overline{v}_{z,-}$ are reflections of each other about the circle $|w-1/R|=1/R$, given that $Rv_{z,+}-1$ and $R\overline{v}_{z,-}-1$ are reflections of each other about the unit circle. Since the reflection of the unit circle about $|w-1/R|=1/R$ is the circle $|w-R/(R^2-1)|=1/(R^2-1)$, we then have that $\psi$ maps
\[
\mathfrak{D}:=\{w:|w-R/(R^2-1)|>1/(R^2-1),\ |w|<1\}
\]
in a two-to-one fashion onto $G_1$, and $\partial \mathfrak{D}$ onto $L_1$. Because $\varphi$ is analytic in a neighborhood of $\overline{G}_1$, the composition $h_\varphi(w)=\varphi(\psi(w))$ makes sense and is analytic in $\overline{\mathfrak{D}}$.

On the other hand, using (\ref{mu1}), it is easy to see that $\lambda$ maps the annulus $\mu^2\leq |t|\leq 1$ conformally onto $\overline{\mathfrak{D}}$. In effect, $\lambda$ is an automorphism of the unit circle, it maps $|t|=\mu$ onto $|w-1/R|=1/R$, and preserves reflections about circles, so that it maps $|t|=\mu^2$ onto $|w-R/(R^2-1)|=1/(R^2-1)$. Thus, $h_\varphi\circ\lambda$ is analytic on $\mu^2\leq |t|\leq 1$, mapping the boundary of this annulus onto the unit circle, and since
\[
(\psi\circ\lambda)(\mu^2t)=\overline{(\psi\circ\lambda)(t)}=(\psi\circ\lambda)(\overline{t}),\quad |t|=1,
\]
we then have
\[
(h_\varphi\circ\lambda)(\mu^2t)=\frac{1}{\overline{(h_\varphi\circ\lambda)(\overline{t})}},\quad |t|=1.
\]

Hence, we can extend $h_\varphi\circ\lambda$ meromorphically to all of $\mathbb{C}\setminus\{0\}$ as specified by (\ref{eq3232}). Now, we see from (\ref{eq3232}) that $h_\varphi\circ \lambda$ maps $\mu^2\leq |t|\leq 1$ onto $\overline{\mathbb{D}}_1$ and  $\mu^4\leq |t|\leq \mu^2$ onto $\overline{\mathbb{C}}\setminus \mathbb{D}_1$, and that
\begin{equation}\label{eq1003}
(h_\varphi\circ \lambda)(\mu^4t)=(h_\varphi\circ \lambda)(t).
\end{equation}
Hence, $h_\varphi\circ \lambda$ maps every annulus $\mu^{4(k+1)}\leq |t|\leq \mu^{4k}$, $k\in \mathbb{Z}$, onto  $\overline{\mathbb{C}}$, and thus composing back with $\lambda$ (which is its own inverse) we obtain Part (a) of the proposition.

The first statement of Part (c) equally follows from (\ref{eq3232}), while from (\ref{eq1003}) we get
\begin{align*}
(h_\varphi\circ\lambda)'(\mu^{4k}t_0)&=\mu^{-4k}(h_\varphi\circ\lambda)'(t_0)
=\mu^{-4k}\varphi'(\psi(\lambda(t_0)))(\psi\circ\lambda)'(t_0),\quad j=1,2,
\end{align*}
which combined with  
\begin{align*}
(\psi\circ\lambda)'(t)&=\left[\mu^{-1}\frac{t-\mu^3}{\mu t-1}+\mu\frac{\mu t-1}{t-\mu^3}\right]'=-\frac{(1-\mu^4)^2(t-\mu^2/t)}{\mu t(1-\mu t)^2(1-\mu^3/t)^2}
\end{align*}
yields (\ref{derivatives}).

Finally, (\ref{modulo}) follows from the fact that $\lambda(t)$ maps $[-\infty,0]$ onto $[\mu,1/\mu]$ and $[0,\infty]$ onto  $[-\infty,\mu]\cup [1/\mu,\infty]$, respectively, while any other line passing through the origin gets mapped onto a circle passing through the points $\mu$ and $1/\mu$, and the two points of this circle that are closest to and farthest from the origin lie, respectively, inside $\mathbb{T}_\mu$ and outside $\mathbb{T}_1$.
\end{proof}

We can now give the
\begin{proof}[Proof of Proposition \ref{thm1}]
The fact that $\mu$ as given by (\ref{mu}) is the smallest number such that $\varphi(\psi(w))$ admits a meromorphic continuation to the annulus $\mu<|w|<1$ now emerges clearly from Proposition \ref{proposition}(a).

For a  given $z\in G_1$, we now look for the solutions that the equation (\ref{eq1}) has on $\{w:|w|<1,\ w\not= \mu\}$. Since $\lambda(t)$ is an automorphism of the unit disk, this is equivalent to finding the solutions that the equation 
\begin{equation}\label{composition}
(h_\varphi\circ\lambda)(t)=\varphi(z)
\end{equation} has on $0<|t|<1$. We have already observed in the proof of Proposition \ref{proposition} that $\psi(w)$ maps the region $\mathfrak{D}$ onto $G_1$, and that for each $z\in G_1$, the solutions to $z=\psi(\lambda(t))$ are the numbers $v_{z,\pm}$. Since $\varphi$ is univalent on $G_1$ and $\lambda(t)$ maps $\mu^2<|t|<1$ onto $\mathfrak{D}$, we see that (\ref{composition}) has exactly two solutions on $\mu^2<|t|<1$, given by $\lambda(v_{z,\pm})$ (recall $\lambda$ is its own inverse). Since $\lambda(v_{z,+})=\mu^2/\lambda(v_{z,-})$, we infer from Proposition \ref{proposition}(c) that the solutions that the equation $h_\varphi(w)=\varphi(z)$ has in $\mathbb{D}_1\setminus\{\mu\}$ are the elements of the two sequences $\{\lambda(\mu^{4k}\lambda(v_{z,\pm}))\}_{k=0}^\infty$. 

Now, $\lambda(t)$ maps the disk $|t-1/R|\leq 1/R$ onto $|w|\leq \mu$, and since $|v_{z,+}|\geq |v_{z,-}|$, we find that $h_\varphi(w)=\varphi(z)$ has solutions in $\mu<|w|<1$ if and only if $|v_{z,+}|>\mu$, in which case, by (\ref{modulo}), the solutions of largest modulus are at most two and contained in $\{v_{z,+},v_{z,-}\}$. 
\end{proof}

\section{Auxiliary lemmas}

The following lemmas have been set apart because they are rather technical and may obscure the central idea of the proof of Theorems \ref{mainthm} and \ref{thm4}.  For a first reading, we recommend the reader to trust the validity of Lemma \ref{lem2} below and move on to the next section.

\begin{lemma} \label{lem7} For every compact set $E\subset \{t:|1-Rt|<1\}$, there exist positive constants $m$ and $M$ such that for every integer $n\geq 1$,
\begin{align*}
\left |e^{-(\mu^{-1}-\mu) ts}-\mu^{-n}\lambda^n(ts/n)\right|\leq {}& \frac{Ms^2e^{-ms}}{n}, \quad t\in E,\quad 0\leq s\leq n\,.
\end{align*}
\end{lemma}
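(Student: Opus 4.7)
Set $\alpha:=\mu^{-1}-\mu>0$, $w:=ts/n$, $a:=\mu^{-1}\lambda(w)$, $b:=e^{-\alpha w}$, so that $e^{-\alpha ts}=b^{n}$ and $\mu^{-n}\lambda^{n}(ts/n)=a^{n}$. The telescoping identity
\[
b^{n}-a^{n}=(b-a)\sum_{k=0}^{n-1}a^{k}b^{n-1-k}
\]
yields $|b^{n}-a^{n}|\leq n\,|b-a|\,\max(|a|,|b|)^{n-1}$, reducing the problem to a uniform quadratic bound $|b-a|=O(|w|^{2})$ and a uniform exponential bound $\max(|a|,|b|)^{n-1}=O(e^{-ms})$, both valid for $t\in E$ and $0\leq s\leq n$.

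For the quadratic bound, I first note that $1/\mu>2/R$ (since $R>2$), so the only pole of $\lambda$ lies outside the closed disk $K:=\{w:|w-1/R|\leq 1/R\}$; hence $a$ and $b$ are both analytic on an open neighborhood of $K$. The Taylor expansions $\mu^{-1}\lambda(w)=(1-w/\mu)/(1-\mu w)=1-\alpha w+O(w^{2})$ and $e^{-\alpha w}=1-\alpha w+O(w^{2})$ agree to first order, so the analytic function $b-a$ vanishes to order at least $2$ at $w=0$, giving $|b-a|\leq C|w|^{2}$ uniformly on $K$. Because $E$ is a compact subset of the open disk $\{|1-Rt|<1\}$, for every $t\in E$ the segment $[0,t]$ lies in $K$ by convexity, so $w=ts/n\in K$ and $|b-a|\leq C|t|^{2}s^{2}/n^{2}\leq C's^{2}/n^{2}$.

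For the exponential bound, compactness of $E$ inside the open disk forces $\Re t\geq c_{0}>0$ on $E$, hence $|b|\leq e^{-\alpha c_{0}s/n}$ and $|b|^{n-1}\leq C_{1}e^{-m_{1}s}$. For $|a|$, a short computation (e.g.\ via $|e^{i\theta}-\mu^{2}|=|\mu^{2}e^{i\theta}-1|$) shows that the M\"obius transformation $\mu^{-1}\lambda$ carries $\partial K$ onto the unit circle and the center $1/R$ to $\mu^{2}$, so $\mu^{-1}\lambda$ maps $K$ conformally onto $\overline{\mathbb{D}}$; in particular $|a|\leq 1$ on $K$, with equality occurring only at the degenerate boundary point $w=0$. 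I upgrade this to an exponential decay in $s$ by splitting the range of $\tau:=s/n$: for $\tau\leq\delta$ with $\delta$ sufficiently small, the Taylor expansion yields $|a|\leq 1-\alpha c_{0}\tau/2\leq e^{-\alpha c_{0}\tau/2}$; for $\tau\in[\delta,1]$, the set $\{\tau t:\tau\in[\delta,1],\,t\in E\}$ is a compact subset of the interior of $K$, so $|a|\leq\eta<1$ there, whence $|a|^{n-1}\leq\eta^{-1}\eta^{s}$ because $n\geq s$. Either regime yields $|a|^{n-1}\leq C_{2}e^{-m_{2}s}$.

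Combining the two estimates and using $s\leq n$ gives $|b^{n}-a^{n}|\leq n\cdot C's^{2}/n^{2}\cdot Ce^{-ms}=Ms^{2}e^{-ms}/n$, as required. The main technical obstacle is the uniform exponential control of $|a|^{n-1}$: since $|a|$ attains its maximal value $1$ at the degenerate boundary point $w=0$ of $K$, no single linearization of $|a|$ can work on the full interval $0\leq s\leq n$, and the split of $\tau=s/n$ into a Taylor regime near the origin and a compactness regime away from it is essential. All remaining steps reduce to direct power-series expansion and compactness.
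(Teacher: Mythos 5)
Your proof is correct, and its skeleton coincides with the paper's: both reduce the problem via the telescoping identity $b^n-a^n=(b-a)\sum a^kb^{n-1-k}$ to (i) a quadratic bound $|b-a|=O(s^2/n^2)$ coming from the first-order agreement of $e^{-\alpha w}$ and $\mu^{-1}\lambda(w)$ at $w=0$, and (ii) uniform exponential decay of the factors $|a|,|b|$ in $s$. For (i) the paper expands $e^{-\alpha w}-\bigl(1-\tfrac{\alpha w}{1-\mu w}\bigr)$ explicitly as a power series and reads off the constant, whereas you invoke analyticity of $(b-a)/w^2$ on the closed disk $K$; these are the same estimate. The genuine difference is in (ii), specifically the control of $|\mu^{-1}\lambda(ts/n)|^{n}$: the paper writes $\mu^{-1}\lambda(ts/n)=1-\tfrac{s}{n}\sigma_t(s/n)$ for a M\"obius map $\sigma_t$, shows via a circular-arc argument that $\sigma_t([0,1])$ stays in $\{|1-z|<1\}$, and then applies the convexity inequality $|1-sz/n|\le e^{-(1-|1-z|)s/n}$ to get a single uniform bound valid on all of $0\le s\le n$. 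You instead split $\tau=s/n$ into a Taylor regime $\tau\le\delta$ (linearization of $|a|$ near $w=0$) and a compactness regime $\tau\in[\delta,1]$ (where $\{\tau t\}$ sits compactly inside the interior of $K$, so $|a|\le\eta<1$ and $n\ge s$ converts $\eta^{n}$ into $\eta^{s}$). Both treatments are valid; the paper's buys a one-line uniform inequality at the cost of the M\"obius-geometry lemma, while yours is more elementary but needs the two-regime patching, which you correctly identify as the crux. One small imprecision: since $\mu^{-1}\lambda$ maps $K$ onto $\overline{\mathbb{D}}$, the equality $|a|=1$ holds on all of $\partial K$, not only at $w=0$; this is harmless because the points $w=\tau t$ you actually consider meet $\partial K$ only at $w=0$, and your compactness step correctly restricts to $\{\tau t:\tau\in[\delta,1],\,t\in E\}$.
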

\begin{proof}
For every $z\in \mathbb{C}$, the function $\kappa(s):=|1-sz|$ is convex in $\mathbb{R}$. Hence, for $z\in U_1:=\{z:|1-z|<1\}$ and an integer $n\geq 1$, we have
\begin{equation}\label{eq102}
\left|1-\frac{sz}{n}\right|\leq \left|1-(1-|1-z|)\frac{s}{n}\right|\leq e^{-(1-|1-z|)s/n}, \quad 0\leq s\leq n.
\end{equation}

Next, suppose $t$ is such that 
$|\mu^{-1}\lambda(t)|<1$ and consider the M\"{o}bius transformation
\[
\sigma_t(s):=\frac{(1-\mu^2)t}{\mu (1-\mu ts)},\quad s\in \mathbb{R}.
\]
Since
\begin{equation}\label{eq103}
\mu^{-1}\lambda(t)=1-\frac{(1-\mu^2)t}{\mu (1-\mu t)},
\end{equation}
we readily see that $\sigma_t(1)\in U_1$. Also, since $|\mu^{-1}\lambda(t)|<1$ if and only if $|1-Rt|<1$, and $\mu^{-1}(1-\mu^2) =(1-\mu^2)(1+\mu^2)^{-1}R<R$, it follows that $\sigma_t(0)=\mu^{-1}(1-\mu^2)t\in U_1$.
Then, given that $\sigma_t(\infty)=0\in\partial U_1$ and that $\sigma$ maps the real line conformally onto a circle, we conclude that $\sigma_t$ must map the segment $[0,1]$ onto a circular arc that lies inside $U_1$. Therefore, by (\ref{eq102}) and (\ref{eq103}), we have for all $ 0\leq s\leq n$ and $|1-Rt|<1$ that
\begin{align}\label{eq1004}
\begin{split}
|\mu^{-1}\lambda(ts/n)|={} &\left|1-\frac{s}{n}\,\sigma_t(s/n)\right|\leq e^{-(1-|1-\sigma_t(s/n)|)s/n}.
\end{split}
\end{align}

Now, to abbreviate, set $\alpha=\mu^{-1}-\mu$, so that for $0\leq s\leq n$ and $|1-Rt|<1$ we have
\begin{align}\label{eq105}
\begin{split}
\left|e^{-\alpha ts/n}-\mu^{-1}\lambda(ts/n)\right|={}&\left|e^{-\alpha ts/n}-\left(1-\frac{\alpha ts/n}{1-\mu ts/n}\right)\right|\\
={}& \frac{s^2}{n^2}\left|\frac{\mu\alpha t^2}{1-\mu ts/n} +\sum_{j=2}^\infty\frac{(-\alpha t)^j}{j!}\left(\frac{s}{n}\right)^{j-2}\right|\\
\leq {}& \frac{s^2}{n^2}\left(\frac{\mu\alpha |t|^2}{1-\mu |t|} +e^{\alpha |t|}-\alpha|t|-1\right).
\end{split}
\end{align}
Thus, for a compact set $E\subset \{t:|1-Rt|<1\}$, and with the constants
\[
M:=\max_{t\in E}\left(\frac{\mu\alpha |t|^2}{1-\mu |t|} +e^{\alpha |t|}-\alpha|t|-1\right),\quad m_1:=\alpha\min_{t\in E}\Re(t)>0,
\]
\[
m_2:=\min_{(t,s)\in E\times[0,n]}(1-|1-\sigma_t(s/n)|)>0,\quad m:=2^{-1}\min \{m_1,m_2\},
\]
we obtain from (\ref{eq1004}) and (\ref{eq105}) that
\begin{align*}
\left |e^{-\alpha ts}-\mu^{-n}\lambda^n(ts/n)\right|\leq {}& \frac{Ms^2}{n^2}\sum_{\ell=1}^ne^{-\alpha \Re(t)s(\ell-1)/n}\left|\mu^{-1}\lambda(ts/n)\right|^{n-\ell}\\
\leq {}& \frac{Ms^2}{n^2}\sum_{\ell=1}^ne^{-\alpha \Re(t)s(\ell-1)/n}e^{-(1-|1-\sigma_t(s/n)|)s(n-\ell)/n}\\
\leq {}& \frac{Ms^2e^{-ms}}{n}, \quad t\in E,\quad 0\leq s\leq n\,.
\end{align*}\end{proof}

Recall that $\langle x\rangle $ denotes the fractional part of $x$. It is easy to verify that
\begin{equation}\label{fractional1}
\chi(\gamma t)=\frac{\mu^3t^2}{1+\mu^2}\int_0^\infty \mu^{-4\langle\log_{\mu^4}(s/\gamma)\rangle }s e^{(\mu-\mu^{-1}) ts}ds\,,\quad \Re(t)>0,\quad \gamma>0.
\end{equation}
In particular, we see that the functions $\{\chi(nt)\}_{n=1}^\infty$ are uniformly bounded on compacts of $\Re(t)>0$.
 
\begin{lemma} \label{lem8} With $G_n(t):=\mu^{-n}\lambda^n(t)\lambda'(t)$, $n\geq 1$, and $\chi(t)$ defined by (\ref{fractional}), we have
\begin{equation}\label{eq1103}
\int_0^n \mu^{-4\langle\log_{\mu^4}(s/n)\rangle }s G'_{n+1}(st/n) ds=\frac{(n+1)(1-\mu^2)(1-\mu^4)[\chi(nt)+O(1/n)]}{\mu^4 t^2}
\end{equation}
locally uniformly on $\{t:|1-Rt|<1\}$ as $n\to\infty$.
\end{lemma}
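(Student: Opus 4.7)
The plan is to derive (\ref{eq1103}) by a direct asymptotic analysis of the integrand, using Lemma \ref{lem7} to approximate the large factor $\mu^{-n}\lambda^n(st/n)$ by $e^{-\alpha ts}$ (with $\alpha := \mu^{-1}-\mu$). The underlying bookkeeping point is that the prefactor on the right-hand side of (\ref{eq1103}) is of order $n$, so any absolute error of size $O(1)$ in the left-hand side corresponds to the claimed $O(1/n)$ relative error.

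First I would differentiate $G_{n+1}(u)=\mu^{-(n+1)}\lambda^{n+1}(u)\lambda'(u)$ by the product rule to get
\[
G'_{n+1}(u) = \mu^{-1}\bigl[(n+1)(\lambda'(u))^2 + \lambda(u)\lambda''(u)\bigr]\mu^{-n}\lambda^n(u).
\]
The second term in the bracket is $O(1)$ in $n$ whereas the first is $O(n)$; since both are multiplied by the same decaying factor $\mu^{-n}\lambda^n(st/n)$, which by Lemma \ref{lem7} is dominated by $e^{-c_1 s}+Cs^2 e^{-ms}/n$ for some $c_1>0$ (using that $\Re(\alpha t)>0$ on any compact subset $E$ of $\{t:|1-Rt|<1\}$), the subdominant piece contributes only $O(1)$ to the integral once multiplied by the bounded weight $\mu^{-4\langle\log_{\mu^4}(s/n)\rangle}\in[1,\mu^{-4}]$, and can be absorbed into the $O(1/n)$ error on the right-hand side.

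Second, for the dominant piece I would replace $(\lambda'(st/n))^2$ by $(1-\mu^2)^2$ and $\mu^{-n}\lambda^n(st/n)$ by $e^{-\alpha ts}$. Using $\lambda'(u)=-(1-\mu^2)/(1-\mu u)^2$ and the elementary inequality $|1-(1-y)^4|\le 15|y|$ for $|y|\le 1$, one obtains
\[
\bigl|(\lambda'(st/n))^2 - (1-\mu^2)^2\bigr| \le C\,s/n
\]
uniformly for $s\in[0,n]$ and $t\in E$, the key point being that $|\mu st/n|\le 2\mu/R<1$ throughout, so $1-\mu st/n$ stays bounded away from $0$. Combined with Lemma \ref{lem7}, each of the two replacement errors, once multiplied by the prefactor $(n+1)\mu^{-1}$ and by $\mu^{-4\langle\log_{\mu^4}(s/n)\rangle}s$, is bounded by $\frac{n+1}{n}\int_0^\infty\mathrm{poly}(s)\,e^{-c_1 s}\,ds = O(1)$.

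Third, after these replacements one has
\[
\int_0^n \mu^{-4\langle\log_{\mu^4}(s/n)\rangle} s\,G'_{n+1}(st/n)\,ds = \frac{(n+1)(1-\mu^2)^2}{\mu}\int_0^n \mu^{-4\langle\log_{\mu^4}(s/n)\rangle} s\,e^{-\alpha ts}\,ds + O(1),
\]
and extending integration from $[0,n]$ to $[0,\infty)$ costs only an exponentially small tail. Finally (\ref{fractional1}) with $\gamma=n$ identifies the resulting integral as $(1+\mu^2)\chi(nt)/(\mu^3 t^2)$, and the identity $1-\mu^4=(1-\mu^2)(1+\mu^2)$ collapses the constants into exactly $(n+1)(1-\mu^2)(1-\mu^4)\chi(nt)/(\mu^4 t^2)$, as required.

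The main obstacle I expect is ensuring that the estimate $(\lambda'(st/n))^2-(1-\mu^2)^2 = O(s/n)$ is genuinely uniform over the entire integration range $s\in[0,n]$, not merely near $s=0$; this rests on the geometric fact that $|\mu t|<1$ on any compact subset of $\{|1-Rt|<1\}$, which keeps $(1-\mu st/n)^4$ bounded below independently of $n$ and $s$. Once this uniform bound is in hand, every remaining estimate reduces to routine integration of a polynomial against an exponential, convergence being guaranteed by $\Re(\alpha t)>0$ on the relevant compact sets.
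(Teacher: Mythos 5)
Your proposal is correct and follows essentially the same route as the paper: the same product-rule decomposition of $G'_{n+1}$ into a dominant $(n+1)(\lambda')^2$ piece and a subdominant $\lambda\lambda''$ piece, the same uniform replacements $(\lambda'(st/n))^2\to(1-\mu^2)^2$ with error $O(s/n)$ and $\mu^{-n}\lambda^n(st/n)\to e^{-(\mu^{-1}-\mu)ts}$ via Lemma \ref{lem7}, and the same final identification through (\ref{fractional1}) with $\gamma=n$. The constant bookkeeping $(1-\mu^2)^2(1+\mu^2)/\mu^4=(1-\mu^2)(1-\mu^4)/\mu^4$ and the uniformity discussion (boundedness of $1-\mu st/n$ away from zero, $\Re(t)>0$ on compacts of $|1-Rt|<1$) match the paper's argument.
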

\begin{proof}  Let $E\subset \{t:|1-Rt|<1\}$ be compact. First we notice that
\begin{align}\label{eq106}
G'_{n+1}(st/n)={} &(n+1)\mu^{-n-1}\lambda^n(ts/n)\mathfrak{L}(st/n)
\end{align}
with
\begin{align}\label{eq107}
\begin{split}
\mathfrak{L}(st/n)={}&\frac{(1-\mu^2)^2}{(1-\mu ts/n)^4}+\frac{2\mu(1-\mu^2)(ts/n-\mu)}{(n+1)(1-\mu ts/n)^4}\\
={} &(1-\mu^2)^2\left[1+O(s/n)+O(1/n)\right]
\end{split}
\end{align}
uniformly for $(t,s)\in E\times [0,n]$ as $n\to\infty$. Hence, with $\alpha=\mu^{-1}-\mu$ and given that $1\leq \mu^{-4\langle\log_{\mu^4}(s/n)\rangle }\leq \mu^{-4}$, we get
\begin{align}\label{eq108}
\begin{split}
\int_0^n \mu^{-4\langle\log_{\mu^4}(s/n)\rangle }s e^{-\alpha ts}\mathfrak{L}(st/n)ds={}&(1-\mu^2)^2\int_0^n \mu^{-4\langle\log_{\mu^4}(s/n)\rangle }s e^{-\alpha ts}ds\\
&+O(1/n)
\end{split}
\end{align}
uniformly for $(t,s)\in E\times [0,n]$ as $n\to\infty$.

Combining (\ref{eq106}), (\ref{eq107}), (\ref{eq108}) and Lemma \ref{lem7}, we readily see that there exist positive constants $M'$ and $m$ such that
\begin{align*}
&\left|\int_0^n \mu^{-4\langle\log_{\mu^4}(s/n)\rangle }s G'_{n+1}(st/n) ds-\frac{(n+1)(1-\mu^2)^2}{\mu}\int_0^n \mu^{-4\langle\log_{\mu^4}(s/n)\rangle }s e^{-\alpha ts}ds\right|\\
&\leq \frac{n+1}{\mu}\int_0^n \mu^{-4\langle\log_{\mu^4}(s/n)\rangle}s \left|\mu^{-n}\lambda^n(ts/n)-e^{-\alpha ts}\right||\mathfrak{L}(st/n)|ds +O(1)\\
&\leq M'\int_0^\infty s^3 e^{-m s}ds +O(1)
\end{align*}
uniformly in $t\in E$ as $n\to\infty$, which together with (\ref{fractional1}) (take $\gamma=n$) yields (\ref{eq1103}).

\end{proof}

\begin{lemma}\label{lem2} For  $G_n(t)$ defined as in Lemma \ref{lem8}, we have
\begin{align*}
\sum_{k=0}^\infty\mu^{4k}G_{n+1}(\mu^{4k}t)={}& -\frac{(n+1)(1-\mu^2)\left[\chi(nt)+O(1/n)\right]}{n^2t}\end{align*}
locally uniformly on $\{t:|1-Rt|<1\}$ as $n\to\infty$.
\end{lemma}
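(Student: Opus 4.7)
The plan is to deduce this lemma from Lemma \ref{lem8} by reinterpreting the integral on its left-hand side as a telescoping sum, then inverting the resulting relation via Abel summation. The key observation is that the weight $\mu^{-4\langle\log_{\mu^4}(s/n)\rangle}$ is piecewise explicit: for $s\in(n\mu^{4(k+1)},n\mu^{4k}]$ with $k\geq 0$, one has $\log_{\mu^4}(s/n)\in[k,k+1)$ and a direct computation gives $\mu^{-4\langle\log_{\mu^4}(s/n)\rangle}=(n/s)\mu^{4k}$. Substituting this into the integrand and using the elementary antiderivative $\int G'_{n+1}(st/n)\,ds=(n/t)G_{n+1}(st/n)$ on each interval, I would obtain the telescoping identity
\[
\int_0^n\mu^{-4\langle\log_{\mu^4}(s/n)\rangle}s\,G'_{n+1}(st/n)\,ds=\frac{n^2}{t}\sum_{k=0}^\infty\mu^{4k}\bigl[G_{n+1}(\mu^{4k}t)-G_{n+1}(\mu^{4(k+1)}t)\bigr].
\]

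Next I would perform summation by parts. Setting $a_k:=G_{n+1}(\mu^{4k}t)$ and noting that $\lambda$ is analytic at $0$ with $\lambda(0)=\mu$, the sequence $\{a_k\}$ is uniformly bounded in $k$ (in fact $a_k\to\mu^2-1$ as $k\to\infty$), so the boundary term $\mu^{4N}a_{N+1}$ vanishes in the limit and the series $\sum\mu^{4k}a_k$ converges geometrically. A direct rearrangement then gives
\[
\sum_{k=0}^\infty\mu^{4k}(a_k-a_{k+1})=\mu^{-4}a_0-\mu^{-4}(1-\mu^4)\sum_{k=0}^\infty\mu^{4k}a_k.
\]
Combining this with the telescoping identity above and with Lemma \ref{lem8}, and then solving the resulting linear relation for $\sum_{k=0}^\infty\mu^{4k}a_k$, produces
\[
\sum_{k=0}^\infty\mu^{4k}G_{n+1}(\mu^{4k}t)=\frac{G_{n+1}(t)}{1-\mu^4}-\frac{(n+1)(1-\mu^2)[\chi(nt)+O(1/n)]}{n^2 t}.
\]

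To finish, I would absorb the stray term $G_{n+1}(t)/(1-\mu^4)$ into the $O(1/n)$ error. On any compact $E\subset\{t:|1-Rt|<1\}$ the map $\lambda$ sends $E$ strictly inside $|w|=\mu$, so $|\mu^{-1}\lambda(t)|\leq q<1$ uniformly on $E$ (this bound is already implicit in the proof of Lemma \ref{lem7}) and hence $G_{n+1}(t)=O(q^n)$ decays geometrically, which is dominated by the $O(1/n^2)$ coming from the right-hand side prefactor $(n+1)(1-\mu^2)/(n^2 t)$ times the $O(1/n)$. The main obstacle, as I see it, is not any single analytic step but rather the careful bookkeeping in the first two steps: identifying the piecewise form of the weight cleanly and rigorously justifying the rearrangement and tail bound that legitimize the Abel summation on an infinite series. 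Modulo these technical checks, the lemma is a short algebraic consequence of Lemma \ref{lem8}.
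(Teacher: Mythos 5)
Your proposal is correct and follows essentially the same route as the paper: both arguments hinge on Abel summation to identify $\sum_k\mu^{4k}G_{n+1}(\mu^{4k}t)$ with the integral of Lemma \ref{lem8} (you decompose the integral into a telescoping sum over the intervals $(n\mu^{4(k+1)},n\mu^{4k}]$ and invert, while the paper sums by parts first and converts the differences into those same integrals), and both finish by absorbing the geometrically small term $G_{n+1}(t)/(1-\mu^4)$ into the $O(1/n)$ error. The bookkeeping you flag as the main obstacle checks out exactly as you describe.
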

\begin{proof}
Using summation by parts we find
\begin{align*}
\sum_{k=0}^K\mu^{4k}G_{n+1}(\mu^{4k}t)={}&\frac{G_{n+1}(t)}{1-\mu^{4}}- \frac{\mu^{4(K+1)}G_{n+1}(\mu^{4K}t)}{1-\mu^{4}}\\
& +\sum_{k=0}^{K-1}\frac{\mu^{4(k+1)}}{1-\mu^{4}}\left[G_{n+1}(\mu^{4(k+1)}t)-G_{n+1}(\mu^{4k}t)\right]\,.
\end{align*}
Letting $K\to\infty $ and using Lemma \ref{lem8} we obtain
\begin{align*}
\sum_{k=0}^\infty\mu^{4k}G_{n+1}(\mu^{4k}t)={}&\frac{G_{n+1}(t)}{1-\mu^{4}}+\sum_{k=0}^{\infty}\frac{\mu^{4(k+1)}}{1-\mu^{4}}\int_{\mu^{4k}}^{\mu^{4(k+1)}}
\frac{\partial G_{n+1}(st)}{\partial s} ds\\
={}& \frac{G_{n+1}(t)}{1-\mu^{4}} -\frac{\mu^4t}{(1-\mu^{4})n^2}\int_0^n \mu^{-4\langle\log_{\mu^4}(s/n)\rangle }s G'_{n+1}(st/n) ds\\
={}& -\frac{(n+1)(1-\mu^2)\left[\chi(nt)+O(1/n)\right]}{n^2t}
\end{align*}
locally uniformly on $\{t:|1-Rt|<1\}$ as $n\to\infty$.
\end{proof}

Before passing to the next section, we observe that 
\begin{equation}\label{chi}
\chi(nt)- \chi((n+1)t)=O(1/n)
\end{equation}
locally unformly on $\Re(t)>0$. Indeed, from the representation (\ref{fractional1}) we find
\begin{align*}
&\chi(nt)- \chi((n+1)t)\\&{}=\frac{\mu^3t^2}{1+\mu^2}\int_0^\infty \mu^{-4\langle\log_{\mu^4}(s/n)\rangle }s e^{-\alpha ts}\left(1-\frac{(n+1)^2}{n^2}e^{-\alpha ts/n}\right)ds,
\end{align*}
and the claim follows since 
\[
 |1-e^{-\alpha ts/n}|=\frac{\alpha s}{n}\left|\int_0^te^{-\alpha
 sz/n}dz\right|\leq \frac{\alpha s|t|}{n},\quad  \Re(t)>0,
\]
where the integration is taken along the segment from $0$ to $t$.
\section{Proofs}\label{Proofs}
\begin{proof}[Proof of Theorems \ref{mainthm} and \ref{thm4}]
We have already observed in the proof of Proposition \ref{proposition} that the function $\psi(\lambda(t))$ maps the annulus $\mu^2<|t|<1$ onto $G_1$, and that for every $z\in G_1$, the only solutions that the equation $z=\psi(\lambda(t))$ has in said annulus are the numbers $t_{z,\pm}=\lambda(v_{z,\pm})$, which satisfy $t_{z,+}=\mu^2/t_{z,-}$. If we now make the change of variables $w=\lambda(\zeta)$ in the integral representation given by Proposition  \ref{cor1}, use Proposition \ref{proposition}(c) and  the residue theorem, we get that for every integer $N\geq 1$, $z\in G_1\setminus\{-2,2\}$, and $n$ sufficiently large, 
\begin{align*}
\begin{split}
&(n+1)^{-1/2}p_n(z)\\
={}&  \frac{\varphi'(z)}{2\pi i}\oint_{\mathbb{T}_1} \frac{[\lambda(\zeta)]^{n}\lambda'(\zeta) [1+K_n(\lambda(\zeta))] d\zeta}{(h_\varphi\circ\lambda)(\zeta)-(h_\varphi\circ\lambda)(t_{z_\pm})}\\
={}&\frac{(1-\mu t_{z,\pm})^2(1-\mu t_{z,\mp})^2\mu^{n+1}}{(1-\mu^4)^2\left(t_{z,\pm}-t_{z,\mp}\right)}\left[\zeta\sum_{k=0}^{N-1}\mu^{4k}G_n(\mu^{4k}\zeta)[1+K_n(\lambda(\mu^{4k}\zeta))]\right]_{\zeta=t_{z,\pm}}^{\zeta=t_{z,\mp}}\\
&+\frac{\varphi'(z)}{2\pi i}\int_{|\zeta|=\mu^{4N-2}}\frac{[\lambda(\zeta)]^n\lambda'(\zeta)[1+K_n(\lambda(\zeta))]d\zeta}{(h_\varphi\circ\lambda)(\zeta)-(h_\varphi\circ\lambda)(t_{z,\pm})},
\end{split}
\end{align*}
where $G_n(\zeta)=\mu^{-n}\lambda^n(\zeta)\lambda'(\zeta)$, and $K_n(\lambda(t))=O(r^{2n})$ uniformly on $|t|\leq 1$ as $n\to\infty$ for every $r\in(\rho,1)$.  Here we are using the notation $\left[F(\zeta)\right]_{\zeta=a}^{\zeta=b}=F(b)-F(a)$.

Since $|\lambda(\zeta)|= 1$ for $|\zeta|= 1$ and $|(h_\varphi\circ\lambda)(\zeta)|=1$ for $|\zeta|=\mu^{4N-2}$, we have 
\[
\left|\frac{1}{2\pi i}\int_{|\zeta|=\mu^{4N-2}}\frac{[\lambda(\zeta)]^n\lambda'(\zeta)[1+K_n(\lambda(\zeta))]d\zeta}{(h_\varphi\circ\lambda)(\zeta)-(h_\varphi\circ\lambda)(t_{z,\pm})}\right|\leq \frac{O(\mu^{4N})}{1-|\varphi(z)|}\underset{N\to\infty}{\to} 0,
\]
and so we arrive at the following representation, valid for all $z\in G_1\setminus\{-2,2\}$ and $n$ large: 
\begin{align}\label{1001}
\begin{split}
&(n+1)^{-1/2}p_n(z)\\
={}&\frac{(1-\mu t_{z,\pm})^2(1-\mu t_{z,\mp})^2\mu^{n+1}}{(1-\mu^4)^2\left(t_{z,\pm}-t_{z,\mp}\right)}\left[\zeta\sum_{k=0}^{\infty}\mu^{4k}G_n(\mu^{4k}\zeta)[1+O(r^{2n})]\right]_{\zeta=t_{z,\pm}}^{\zeta=t_{z,\mp}},
\end{split}
\end{align}
where the constant involved in the $O(r^{2n})$ term above is independent of $n$ and $z$. 

In the above calculations, the restriction that $z\not\in\{-2,2\}$ is a technical one to avoid dealing with double poles in the residue computations. But of course, by the analyticity of the functions involved, the same estimates remain true when letting $z\to\pm 2$. 

Notice that since $\lambda(t)$ maps the disk $\{t:|t-1/R|<1/R\}$ onto $|w|<\mu$, we have
 that for $k\geq 0$ and $t$ lying in that disk, 
\begin{equation}\label{eqq}
|G_n(\mu^{4k}t)|=\mu^{-n}|\lambda(\mu^{4k}t)|^n|\lambda'(\mu^{4k}t)|\leq \max_{|t-1/R|\leq1/R}|\lambda'(t)|.
\end{equation}

Now, it is easy to verify that the function $\psi(\lambda(t))$ maps the region $D$ defined by (\ref{domainD})
onto the interior of $\Sigma_0$, that $D\subset \{t:\mu^2<|t|<1\}$, and that $D$ is symmetric with respect to the circle $|t|=\mu$.  
Hence, if we make the replacement $t=t_{z,\pm}$ in (\ref{1001}), then $t_{z,\mp}=\mu^2/t$ and $z=\psi(\lambda(t))$,  and Theorem \ref{mainthm} follows immediately by combining (\ref{1001}), (\ref{eqq}), (\ref{chi}), and Lemma \ref{lem2}. 

We now prove the equality (\ref{partb}) in Theorem \ref{thm4}, which we do locally by showing that every $z_0\in [x_\mu,2]$ has a neighborhood $B_\epsilon(z_0):=\{z:|z-z_0|<\epsilon\}$ such that (\ref{partb}) holds uniformly on $z\in B_\epsilon(z_0)\cap  [x_\mu,2]$. The proof of (\ref{parta}) is omitted as it follows in a very similar way.

First, observe that as $z$ varies over the interval $[x_\mu,2]$, the two solutions $v_{z,\pm}$ of the equation $z=\psi(w)$ satisfy $v_{z,+}=\overline{v}_{z,-}$ and they vary along that arc of the circle $|w-1/R|=1/R$ that lies on $\mu\leq |w|<1$. Parametrizing the upper half of this arc by setting 
\begin{equation}\label{eqtttt}
 v_{z,+}=R^{-1}+R^{-1}e^{i\theta}=2R^{-1}e^{i\theta/2}\cos(\theta/2), \quad 0\leq \theta\leq \arccos(x_\mu/2),
\end{equation}
we see that $|v_{z,+}|=2R^{-1}|\cos(\theta/2)|$ varies from $\mu$ to $2/R$, and equals $\mu$ exactly when $z=x_\mu$.  

For a point $z_0\in (x_\mu,2]=\Sigma_2$, we have that $|v_{z_0,\pm}|=|\lambda(t_{z_0,\pm})|>\mu$. In view of the inequality (\ref{modulo}) of Proposition \ref{proposition}(c), this allows us to find numbers $\varrho_1$ and $\varrho_2$ such that
\[
|\lambda(\mu^{4k}t_{z_0,\pm})| < \varrho_1<\varrho_2<|v_{z_0,\pm}|, \quad k\geq 1.
\]
Hence, if $\epsilon>0$ is chosen sufficiently small, we can guarantee that
\begin{equation}\label{eqttt}
|\lambda(\mu^{4k}t_{z,\pm})| < \varrho_1<\varrho_2<|v_{z,\pm}|, \quad \ z\in B_\epsilon(z_0), \ k\geq 1,
\end{equation}
so that the dominant term in the right-hand side of (\ref{1001}) corresponds to $k=0$ and we get from that for all $z\in B_\epsilon(z_0)$ 
\begin{align}\label{1001001}
\begin{split}
\frac{p_n(z)}{\sqrt{n+1}}={}&\frac{(1-\mu t_{z,\pm})^2(1-\mu t_{z,\mp})^2\mu\left( t_{z,\mp} \lambda'(t_{z,\mp})v^n_{z,\mp}-t_{z,\pm} \lambda'(t_{z,\pm})v^n_{z,\pm}\right)}{(1-\mu^4)^2\left(t_{z,\pm}-t_{z,\mp}\right)}\\
&+O(\varrho_2^nr^{2n})+O(\varrho_1^n)\\
={}&\frac{v^n_{z,+}}{\psi'(v_{z,+})}+\frac{v^{n}_{z,-}}{\psi'(v_{z,-})}+O(\varrho_2^nr^{2n})+O(\varrho_1^n)
\end{split}
\end{align}
uniformly for $z\in B_\epsilon(z_0)$ as $n\to\infty$. To get this last equality we have used (\ref{derivatives}), and in case $z=z_0=2$, the estimate is to be understood in a limiting sense.

If $z_0=x_\mu$, we have $|v_{z_0,\pm}|=\mu$ and the points $t_{z_0,\pm}$ are located precisely where the circles $|t|=\mu$ and $|t-1/R|=1/R$ intersect at. Hence, if we pick $\epsilon>0$ sufficiently small, we can guarantee that as $z$ varies over $B_{\epsilon}(x_\mu)$, the points $\mu^4t_{z,\pm}$ vary over some fixed compact subset of $|t-1/R|<1/R$, so that (\ref{eqq}) holds true for $t=t_{z,\pm}$  and $k\geq 1$, and we get once again from (\ref{1001}) and Lemma \ref{lem2} that 
\begin{align}\label{eqtt}
\begin{split}
(n+1)^{-1/2}p_n(z)={}&\frac{v^n_{z,+}}{\psi'(v_{z,+})}+\frac{v^{n}_{z,-}}{\psi'(v_{z,-})}+O(\mu^n/n)
\end{split}
\end{align}
uniformly for $z\in B_{\epsilon}(x_\mu)$ as  $n\to\infty$. Using (\ref{eqtttt}) and (\ref{eqttt}) one can easily verify that for $z\in B_{\epsilon}(x_\mu)\cap[x_\mu,2]$, (\ref{1001001}) and (\ref{eqtt}) transform into (\ref{partb}).
\end{proof}
\begin{proof}[Proof of Theorem \ref{thm8}] 
In view of Theorem \ref{mainthm}, it suffices to show that for a subsequence $\{n_k\}\subset \mathbb{N}$,  the functions $\chi(n_k t)$ converge normally on $D$ as $k\to\infty$ if and only if (\ref{limite}) holds true for some $q\in[0,1)$. 

The ``if" part, that is, $\chi(n_k t)\to \chi(\mu^{4q} t)$ whenever (\ref{limite}) holds, follows directly from the representation (\ref{fractional1}). For the ``only if" part,  suppose that the functions $\chi(n_kt)$ converge as $k\to\infty$, but that (\ref{limite}) holds true for no $q\in [0,1)$. Then, we can find two subsequences  of $\{n_k\}$, say $\mathcal{N}_1$ and  $\mathcal{N}_2$, such that 
\[
\lim_{\underset{n\in\mathcal{N}_1}{n\to\infty}}e^{2\pi i \log_{\mu^4}(n)}=e^{2\pi i q},\quad \lim_{\underset{n\in\mathcal{N}_2}{n\to\infty}}e^{2\pi i \log_{\mu^4}(n)}=e^{2\pi i p}
\]
for some $0\leq q<p<1$.  But this and the convergence of $\chi(n_kt)$ leads to a contradiction, for we shall now prove that 
\[
\chi(\mu^{4q}t)-\chi(\mu^{4q+2}/t)\not\equiv \chi(\mu^{4p}t)-\chi(\mu^{4p+2}/t)
\]
(or equivalently, that $f_q\not=f_p$) for $0\leq q<p<1$. 
 
Making $t=\mu^{4x+1}$, $\varrho=\mu^{4}$, $\mathfrak{q}=q+1/4$, and defining  
\[
g_\mathfrak{q}(x):=\chi(\varrho^{\mathfrak{q}+x})-\chi(\varrho^{\mathfrak{q}-x}), \quad x\in \mathbb{R},
\]
we see that it suffices to show that $g_\mathfrak{q}-g_\mathfrak{p}\not= 0$ for $1/4\leq \mathfrak{q}<\mathfrak{p}<5/4$. The function
\[
g_\mathfrak{q}(x)=\sum_{k=-\infty}^{\infty}\varrho^{k+\mathfrak{q}+x}e^{-\alpha\varrho^{k+\mathfrak{q}+x}}-
\sum_{k=-\infty}^{\infty}\varrho^{k+\mathfrak{q}-x}e^{-\alpha\varrho^{k+\mathfrak{q}-x}}, \quad \alpha=\mu^{-1}-\mu>0,
\] is analytic and $1$-periodic on $\mathbb{R}$. Using the Poisson summation formula, we find its Fourier expansion to be
\[
g_\mathfrak{q}(x)-g_\mathfrak{p}(x)=8\sum_{n=1}^{\infty}\Re(c_n)\sin(\pi n(\mathfrak{p}-\mathfrak{q}))\sin(2\pi nx),\quad x\in\mathbb{R},
\]
where
\[
c_n=\int_{-\infty}^{\infty}\varrho^{t}
e^{-\alpha\varrho^{t}}e^{i\pi n(2t-\mathfrak{q}-\mathfrak{p})}dt,
\]
and we only need to show that there is at least one integer $n\geq 1$ with $\Re(c_n)\not=0$.

Making the change of variable $v=\alpha\varrho^t$ and using Stirling's approximation formula for the Gamma function we find
\begin{align*}
c_n={}&\frac{e^{-i\pi n(\mathfrak{q}+\mathfrak{p})}\Gamma\left(1+2\pi ni/\ln\varrho\right)}{\alpha^{1+2\pi ni/\ln\varrho}\ln\varrho}\\
={}&\frac{\left|\Gamma\left(1+2\pi ni/\ln\varrho\right)\right|(1+O(1/n))}{\alpha\ln\varrho}\cdot e\left(\frac{n\ln n}{\ln\varrho}-\sigma n+\frac{1}{8}\right),\quad n\geq 1,
\end{align*}
where $e(x):=\exp(2\pi i x)$ and
\[
\sigma:=\frac{\mathfrak{q}+\mathfrak{p}}{2}+\frac{1+\ln\alpha-\ln(2\pi/\ln\varrho)}{\ln\varrho}\,.
\]
It follows that $c_n\not=0$ for all $n\geq 1$. Moreover, taking the quotient $c_{n+1}/c_n$, we find that if all the $c_n$'s are purely imaginary, then
\[
\Im \left\{e\left(\frac{\ln(1+\frac{1}{n})^n}{\ln\varrho}+\frac{\ln(n+1)}{\ln\varrho}-\sigma\right)(1+O(1/n))\right\}=0,\quad n\geq 1,
\]
contradicting the fact that the sequence $\{\ln n/\ln\varrho\}_{n=1}^\infty$ is dense in $[0,1]$ modulo 1.

This last statement follows from the more general and easily verifiable observation that $\{\langle a_n\rangle\}_{n=1}^\infty$ is dense in $[0,1]$ whenever $\lim_{n\to\infty}a_n=+\infty$ and $\lim_{n\to\infty}a_{n+1}-a_n=0$.
\end{proof}

We finish this section by briefly indicating how to obtain Proposition \ref{cor1} from the recent results of \cite{series}.

Let $\kappa_n$ denote the leading coefficient of $p_n$, and let $P_n(z)=\kappa_n^{-1}p_n(z)$ be the $n$th monic orthogonal polynomial. Carleman proved in \cite[Satz IV]{Carleman} that
\begin{equation}\label{leadingcoef}
\kappa_n=\sqrt{n+1} [\phi'(\infty)]^{n+1}(1+O(\rho^{2n}))
\end{equation}
as $n\to\infty$.

Let $r$ be a number such that  $\rho<r<1$. For this $r$ and every integer $n\geq 0$, we construct a sequence of functions $\{f_{n,k}(z)\}_{k=0}^\infty$ as specified by equations (8) and (9) of \cite{series}. In \cite[Proof of Theorem 1.1]{series}, it is shown that for $n$ large enough, the series $\sum_{k=0}^{\infty}f_{n,2k+2}(z)$ and $\sum_{k=0}^{\infty}f_{n,2k+1}(z)$ converge absolutely and locally uniformly for $z \in\Omega_{\rho}\backslash L_{1/r}$ and $z \in G_{1/\rho}\backslash L_r$, respectively, and that
\begin{equation}\label{eq110}
\sum_{k=0}^{\infty}f_{n,2k+2}(z) = O(r^{2n}), \quad \sum_{k=0}^{\infty}f_{n,2k+1}(z)= O(r^n)
\end{equation}
locally uniformly as $n\to\infty$ in their respective domains of definition.  

From the very definition of $f_{n,2k+2}$ in \cite[equation (9)]{series},  we see that for $k\geq 0$, $f_{n,2k+2}(\psi(w))$ has an analytic continuation $f_{n,2k+2}^*(w)$ to $\mathbb{C}\setminus \mathbb{T}_{1/r}$ given by
\begin{equation}\label{eqqtt1}
f_{n,2k+2}^*(w)=\frac{1}{2 \pi i} \oint_{\mathbb{T}_{1/r}} \frac{f_{n,2k+1}(\psi(t)) t^{-n-1} dt}{t-w}, \quad |w|\neq 1/r,
\end{equation}
and if we define
\begin{equation*}
F_n(w):=\sum_{k=0}^{\infty} f_{n,2k+2}^* (w),\quad |w|\not=1/r,\quad n\geq 0,
\end{equation*}
then by (\ref{eq110}) and (\ref{eqqtt1}),
\begin{equation}\label{eq2}
F_n(w)= O(r^{2n})\quad \text{and}\quad  F_n'(w)=O(r^{2n})
\end{equation}
locally uniformly on $|w|\neq 1/r$ as $ n \rightarrow \infty$.

According to Theorem 1.1 of \cite{series}, for $z\in G_r$ and $n$ large,
\begin{align}\label{eq1120}
(n+1) [\phi'(\infty)]^{n+1} P_n(z) &=  \frac{\varphi'(z)}{2\pi i} \oint_{L_r} \frac{\left(\sum_{k=0}^{\infty}f_{n,2k}(\xi) \right)\varphi'(\xi) \phi(\xi)^{n+1} d\xi}{[\varphi(\xi)-\varphi(z)]^2}\,.
\end{align}
Deforming the contour of integration $L_r$ into $L_1$  does not change the value of this last integral for values of $z \in G_r$, and leaves a function that is analytic in $G_1$. By the uniqueness of the analytic continuation and integrating by parts in (\ref{eq1120}) followed by the change of variables $\xi=\psi(w)$, we obtain
\begin{align}\label{eq1018}
\begin{split}
P_n(z) &=   \frac{\varphi'(z)}{(n+1) [\phi'(\infty)]^{n+1}2\pi i} \oint_{\mathbb{T}_1} \frac{[(1+F_n(w)) w^{n+1}]' dw}{\varphi(\psi(w))-\varphi(z)} \\
	&=  \frac{\varphi'(z)}{ [\phi'(\infty)]^{n+1}2\pi i} \oint_{\mathbb{T}_1} \frac{w^{n}(1+K^*_n(w))  dw}{\varphi(\psi(w))-\varphi(z)},\quad  z \in G_1,
\end{split}
\end{align}

with $K_n^*(w)= F_n(w) + w F_n'(w)/(n+1)$, and so multiplying (\ref{eq1018}) by $\kappa_n$, letting
\[
K_n(w):=\frac{\kappa_nK_n^*(w)}{\sqrt{n+1} [\phi'(\infty)]^{n+1}}+\frac{\kappa_n}{\sqrt{n+1} [\phi'(\infty)]^{n+1}}-1\,,
\]
and using  (\ref{eq2}) and (\ref{leadingcoef}), we arrive at Proposition \ref{cor1}.

\bibliographystyle{amsplain}

\end{document}